\newtheorem{theorem}{Theorem}[section]
\newtheorem{lemma}[theorem]{Lemma}
\newtheorem{corollary}[theorem]{Corollary}
\numberwithin{equation}{section} \theoremstyle{definition}
\newcommand\darrowa{\longrightarrow {\mkern -27mu} {\raise 6pt \hbox{ $\pi_0$}} {\mkern 16mu}}
\def\Der{\operatorname{Der}}
\def\span{\operatorname{span}}
\newcommand{\C}{\ensuremath{\mathbb C}\xspace}
\renewcommand{\a}{\ensuremath{\alpha}}
\renewcommand{\l}{\ensuremath{\lambda}}
\newcommand{\Z}{\ensuremath{\mathbb{Z}}\xspace}
\newcommand{\W}{\ensuremath{\mathcal{W}}\xspace}
\newcommand{\K}{\ensuremath{\mathcal{K}}\xspace}
\renewcommand{\phi}{\varphi}
\renewcommand{\leq}{\leqslant}
\renewcommand{\geq}{\geqslant}
\def\mh{\mathfrak{h}}
\def\sl{\mathfrak{sl}}
\def\gl{\mathfrak{gl}}
\def\l{\lambda}
\def\span{\text{span}}
\def\Der{\text{Der}}
\newcommand{\ptl}{\partial}
\newcommand{\be}{\beta}
\begin{document}
\title[Irreducible Witt modules]{Irreducible  Witt modules from Weyl modules and $\gl_{n}$-modules}
\author{Genqiang Liu, Rencai Lu, Kaiming zhao}
\date{}\maketitle

%\centerline{\bf Dedicated to Prof. Zhexian Wan's 90th birthday}

\begin{abstract}
For an irreducible module $P$ over the Weyl algebra $\K_n^+$ (resp. $\K_n$) and  an  irreducible module $M$ over the general liner Lie algebra $\gl_n$, using Shen's monomorphism, we make $P\otimes M$ into a module over the Witt algebra $W_n^+$  (resp. over $W_n$). We obtain the necessary and sufficient conditions for  $P\otimes M$
 to be  an irreducible module over
$W_n^+$ (resp. $W_n$),  and determine all submodules of $P\otimes M$ when
it is reducible. Thus we have constructed  a large family of irreducible weight modules  with many
different weight supports and many irreducible non-weight modules over $W_n^+$ and  $W_n$.
\end{abstract}
\vskip 10pt \noindent {\em Keywords:}   irreducible  module, $\gl_{n}$, Shen's monomorphism,  weight module, Weyl algebra

\vskip 5pt
\noindent
{\em 2010  Math. Subj. Class.:}
17B10, 17B20, 17B65, 17B66, 17B68

\vskip 10pt

\section{Introduction}

We denote by $\mathbb{Z}$, $\mathbb{Z}_+$, $\mathbb{N}$ and
$\mathbb{C}$ the sets of  all integers, nonnegative integers,
positive integers and complex numbers, respectively.

Let $n>1$ be an integer,  ${A_n^+ } =\C[t_1,t_2,\cdots,t_n]$ be the polynomial algebra, and  ${A_n } =\C[t_1^{\pm 1},t_2^{\pm 1},\cdots,t_n^{\pm 1}]$ be the Laurent polynomial algebra. The derivation
Lie algebra $W_n^+=\Der(A_n^+)$ is the Witt Lie algebra or Cartan type Lie algebra of vector fields with polynomial coefficients, while $W_n=\Der(A_n)$ is the Witt Lie algebra or Cartan type Lie algebra of vector fields with Laurent polynomial coefficients. These Lie algebras are colsely related to Jacobian conjecture, see \cite{Z}, \cite{HM}.
The Cartan type Lie algebras  of vector fields with formal power series coefficients are the Lie
algebras of infinite Lie groups which arose in the work of Sophus Lie around
1870 and were further studied by Elie Cartan in 1904--1908.  The general theory of representations of the
Cartan type infinite-dimensional Lie algebras was initiated only in 1973 when
A.N. Rudakov began the study of topological irreducible representations of these
Lie algebras, see \cite{R1,R2}. Rudakov's main result, roughly speaking, is that all
irreducible representations which satisfy a natural continuity condition can be
described explicitly as  quotients of induced modules. In 1999,
I. Penkov and V. Serganova \cite{PS} described  the supports of all irreducible weight modules over $W_n^+$. However, there are very few  concrete examples of irreducible
weight modules over $W_n^+$ for each  possible weight support. There are qute a lot of studies on representrations of $W_n$, see \cite{BF}, \cite{BMZ}, \cite{E1}, \cite{E2}, \cite{GLZ}, \cite{GZ}, [L1-5], \cite{MZ}.  Recently,  Billig and Futorny \cite{BF} classified all   irreducible Harish-Chandra  $W_d$-module,  Cavaness and Grantcharov \cite{CG} determined all irreducible weight $W_2^+$-modules with bounded weight multiplicities. Our goal of the present paper is to construct  irreducible weight modules  with many different weight sets and construct many irreducible non-weight modules over $W_n^+$ and $W_n$.

In 1986, Shen \cite{Sh} constructed a Lie algebra monomorphism from $W_n^+$ (resp. $W_n$) to the semidirect product Lie algebra $W_n^+\ltimes \gl_n({A}_n^+ )$ (resp. $W_n\ltimes \gl_n({A}_n )$) which  are later called
the full toroidal Lie algebras.  For  an irreducible module $P$ over the Weyl algebra $\K_n^+$ (resp. $\K_n$) and an  irreducible module $M$   over the general linear Lie algebra $\gl_n$,  using Shen's monomorphism,
the tensor product
$F(P, M)=P\otimes_{\C} M$
becomes a $W_n^+$-module (resp. $W_n$-module), see (\ref{Action1}). Let $\C^n$ be the natural $n$-dimensional representation of $\gl_n$ and let $V(\delta_k,k)$ be its
$k$-th exterior power, $k = 0,\cdots,n$. Note that $V(\delta_k,k)$ is an irreducible $\gl_n$-module for all
$k = 0,\cdots,n$. We show that  $F(P, M)$ is an irreducible module over
$W_n^+$ if and only if  $M\not\cong V(\delta_k, k)$ for any $k\in \{0, 1,\cdots, n\}$, see Theorem \ref{thm-2.1}. The modules $F(P, V(\delta_k,k))$
are generalization of  the modules of differential
$k$-forms. These modules form the following complex
$$\xymatrix
{0\ar[r] &F(P,V(\delta_0,0))\ar[r]^{\pi_0} &F(P,V(\delta_1,1))\ar[r]^{\hskip 1cm\pi_1} &\cdots \\
\cdots\ar[r]^{\pi_{n-2}\hskip 2cm}&{F(P,V(\delta_{n-1},n-1))}\ar[r]^{\hskip .8cm\pi_{n-1}} &F(P,V(\delta_n,n))\ar[r]& 0.}
$$

The differential $\pi_k$ of the complex is a $W_n^+$-module  (resp. $W_n$-module) homomorphism.
 Thus the kernel of  $\pi_k$ is a submodule of  $F(P, V(\delta_k,k))$. As a result, for $1 \leq  k \leq  n-1$, we see that  $F(P, V(\delta_k,k))$ are
reducible $W_n^+$-modules  (resp. $W_n$-module). Moreover, $F(P, V(\delta_0,0))$
 is irreducible if and only if $P$ is not isomorphic to the natural module $A_n^+ $  (resp. $A_n$).
The module $F(P,V(\delta_n,n))$ is irreducible if and only if $\sum_{k=1}^n \partial_k P=P$, see theorem \ref{3.5}.

The paper is organized as follows. In Section 2, we recall the definition of the Witt algebras $W_n^+$ and $W_n$, and the Lie algebra homomorphism from
$W_n^+$ (resp. $W_n$) to the full toroidal Lie algebra, which was first constructed by Shen in 1986, see \cite {Sh}. In Section 3, we prove the necessary and sufficient conditions for the
$W_n^+$-modules (resp. $W_n$-module) $F(P,M)$ to be simple, and determine all their submodules when they are reducible. In Section 4 we give a couple of examples using our established results.
\section{Preliminaries}

We fix the vector space $\mathbb{C}^n$ of $n\times 1$ complex  matrices.
Denote the standard basis by $\{\epsilon_1,\epsilon_2,...,\epsilon_n\}$.

\subsection{Witt algebras $W_n^+$ and $W_n$}
%Denote by ${{A}_n^+ } =\C[t_1,t_2,\cdots,t_n]$ the polynomial algebra in the commuting variables $t_1,t_2,\cdots,t_n$.
Let $\ptl_{i}=\frac{\ptl}{\ptl{t_i}}$ for any $i=1, 2, \cdots, n$.
Recall that  the classical Witt algebra $ W_n=\sum_{i=1}^n{ A}_n\ptl_{i}$ has the following Lie bracket:
$$[\sum\limits_{i=1}^{n}f_{i}\ptl_{i},\sum\limits_{j=1}^{n}g_j\ptl_{j}]=
\sum\limits_{i,j=1}^{n}(f_{j}\ptl_{j}
(g_{i})-g_{j}\ptl_{j}(f_{i}))\ptl_{i},$$
where all $f_{i}, g_j\in A_n$, and $ W_n^+=\sum_{i=1}^n{ A}_n^+\ptl_{i}$ is a subalgebra of $W_n$. It is well-known that both $W_n$ and $W_n^+$ are simple Lie algebras.

\subsection{ Finite-dimensional $\mathfrak{gl}_n$-modules}
We denote by $E_{i,j}$ the $n\times n$ square matrix
with $1$ as its $(i,j)$-entry and 0 as  other entries. We know that the general linear  Lie
algebra
$$\gl_n=\sum_{1\leq i, j\leq
n}\C E_{i,j}.$$
 Let $\mathfrak{H}=\span\{E_{ii}\,|\,1\le i\le n\}$ and
$\mh=\span\{h_{i}\,|\,1\le i\le n-1\}$ where
$h_i=E_{ii}-E_{i+1,i+1}$. %and define $\epsilon_i\in\mathfrak{H}^*$ by
%$\epsilon_i(\sum_{j=0}^na_jE_{jj})=a_i$ where $a_i\in\C$.
%We set
%$\a_{ij}=\epsilon_{i}-\epsilon_j$
%%and $\a_i=\epsilon_{i}-\epsilon_{i+1}$
%for $i,j=1,2,...,d$ with $i\ne j$. Then
%$\Delta=\{\a_{i,j}\,|\,i,j=1,2,...,n, i\ne j\}$ is the root system
%of $\sl_d$, and
%%$B=\{\a_i\,|\,1\le i\le d-1\}$. Then $B$ is a base for the root system $\Delta$.
%denote by $\mathcal{W}$  the Weyl group of $\sl_d$ with respect to
%$\Delta$.
%w_0\in \mathcal{W}$ be such that $w_0(B)=-B$.

Let $\Lambda^+=\{\l\in\mh^*\,|\,\l(h_i)\in\Z_+ \text{ for } i=1,2,...,n-1\}$ be the set of dominant weights with respect to $\mh$. For any
$\psi\in \Lambda^+$,  let $V(\psi)$ be  the simple $\sl_n$-module with
highest weight $\psi$. We make $V(\psi)$ into a $\gl_n$ module by
defining the action of the identity matrix $I$ as some scalar
$b\in\mathbb{C}$. We denote the resulting $\gl_n$-module as $V(\psi,b)$.

Define the fundamental weights $\delta_i\in\mh^*$ by
$\delta_i(h_j)=\delta_{i,j}$ for all $i,j=1,2,..., n-1$. It is
well-known that the module $V(\delta_1, 1)$ can be realized as the
natural representation of $\gl_n$ on $\mathbb{C}^n$ (the matrix
product), which we can write as $E_{ji}e_l=\delta_{li}e_j$.  In
particular,
\begin{equation}(ru^T)v=(u|v)r,\,\,\forall\,\, u,v,r\in \mathbb{C}^n,\end{equation}
where $u^T$ is the transpose of $u$, and $(u|v)=u^Tv$ is the standard bilinear form on $\mathbb{C}^n$. The exterior
product $\bigwedge^k(\mathbb{C}^n)=\mathbb{C}^n\wedge\cdots\wedge
\mathbb{C}^n\ \ (k\ \mbox{times})$ is a $\mathfrak{gl}_n$-module
with the action given by $$X(v_1\wedge\cdots\wedge
v_k)=\sum\limits_{i=1}^k v_1\wedge\cdots v_{i-1}\wedge
Xv_i\cdots\wedge v_k, \,\,\forall \,\, X\in \gl_n,$$ and the following
$\gl_n$-module isomorphism is well known:
\begin{equation}\label{dk}{\bigwedge}^k(\mathbb{C}^n)\cong V(\delta_k,
k),\,\forall\,\, 1\leq k\leq n-1.\end{equation}
For convenience of late use, we let $V(\delta_0,
0)$ be the 1-dimensional trivial $\gl_n$-module. We set  $\bigwedge^0(\mathbb{C}^n)=\C$ and
$v\wedge a=av$ for any $v\in\C^n, a\in\C$.
%Let
%$$D=\sum_{r=1}^nt_r\ptl_{t_r}$$
%be the degree operator on the polynomial algebra $\mathcal{
%A}_n=\C[t_1,...,t_n]$. Then we  have the following inhomogeneous
%representation of $\gl_{n}$ on ${\mathcal{A}_n^+ } $ determined by
%$$E_{i,j}|_{\mathcal{A}_n^+ } =t_i\ptl_{t_j}$$
%for $i,j\in\ol{1,n}$.

\subsection{ Full toroidal  algebras and Shen's monomorphism} Now we have  the general linear Lie
algebras
$$\gl_n({A}_n^+ )=\sum_{i,j=1}^n{A}_n^+ E_{i,j},\quad \gl_n({A}_n )=\sum_{i,j=1}^n{A}_n E_{i,j}$$ over the commutative associative algebras ${A}_n^+ $ and ${A}_n$ respectively,
and the full toroidal Lie algebras (see \cite{EJ})
$$\widehat{  W}_n^+=  W_n^+\ltimes \gl_n({A}_n^+ ),\quad \widehat{  W}_n=  W_n\ltimes \gl_n({A}_n )$$
 with Lie brackets
$$[d_1+f_1A_1, d_2+f_2A_2]=[d_1,d_2] +d_1(f_2)A_2-d_2(f_1)A_1+f_1f_2[A_1,A_2]$$
for $d_1,d_2\in  W_n$, $A_1,A_2\in \gl_n,$  and  $ f_1, f_2\in {A}_n $.

The following Lie algebra monomorphism was given by Shen in 1986, see \cite{Sh}.
\begin{lemma}[Shen's monomorphism]\label{lemma2.1}  The  linear map $\tau:  W_n\to \widehat{
 W}_n$ given by
$$\tau(\sum\limits_{i=1}^{n}f_{i}\ptl_{i})=
\sum\limits_{i=1}^{n}f_{i}\ptl_{i}+\sum_{i,j=1}^n\ptl_{i}(f_j)
E_{i,j}, \forall f_i\in{A}_n $$ is
 a Lie algebra monomorphism.\end{lemma}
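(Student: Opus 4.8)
The plan is to verify directly that $\tau$ respects the Lie bracket, since injectivity is immediate: the $W_n$-component of $\tau(\sum f_i\partial_i)$ is $\sum f_i\partial_i$ itself, so if $\tau(D)=0$ then $D=0$. Thus the whole content of the lemma is that $\tau$ is a Lie algebra homomorphism, i.e. that
$$\tau([D_1,D_2]) = [\tau(D_1),\tau(D_2)]$$
for all $D_1 = \sum_i f_i\partial_i$ and $D_2 = \sum_j g_j\partial_j$ in $W_n$. The strategy is to expand both sides using the bracket of $\widehat W_n$ given above and compare the $W_n$-part and the $\gl_n(A_n)$-part separately.

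First I would compute the right-hand side. Write $\tau(D_1)=D_1 + \sum_{i,j}\partial_i(f_j)E_{i,j}$ and similarly for $D_2$. Using the formula $[d_1+f_1A_1, d_2+f_2A_2]=[d_1,d_2]+d_1(f_2)A_2-d_2(f_1)A_1+f_1f_2[A_1,A_2]$ with $d_1=D_1$, $d_2=D_2$, $f_1A_1=\sum_{i,j}\partial_i(f_j)E_{i,j}$, $f_2A_2=\sum_{k,l}\partial_k(g_l)E_{k,l}$, one gets four groups of terms: the $W_n$-bracket $[D_1,D_2]$; the term $D_1\big(\sum_{k,l}\partial_k(g_l)E_{k,l}\big)=\sum_{k,l}D_1(\partial_k(g_l))E_{k,l}$; minus the term $\sum_{i,j}D_2(\partial_i(f_j))E_{i,j}$; and the commutator term $\sum_{i,j,k,l}\partial_i(f_j)\partial_k(g_l)[E_{i,j},E_{k,l}]$, where $[E_{i,j},E_{k,l}]=\delta_{jk}E_{i,l}-\delta_{li}E_{k,j}$.

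The $W_n$-parts match trivially, so the crux is to show the $\gl_n(A_n)$-parts agree, namely that
$$\sum_{i,j}\partial_i\big(f_j\partial_j(g_l)-g_j\partial_j(f_l)\big)E_{i,l}$$
(which is the $\gl_n$-part of $\tau([D_1,D_2])$, summed over $l$ too) equals the sum of the three bracket contributions computed above. The key step is a bookkeeping computation: apply the Leibniz rule $\partial_i(f_j\partial_j(g_l)) = \partial_i(f_j)\partial_j(g_l) + f_j\partial_i\partial_j(g_l)$ on the left, do the same for the $g$-term, and on the right expand $D_1(\partial_k(g_l))=\sum_j f_j\partial_j\partial_k(g_l)$ and the analogous expression, then contract the Kronecker deltas in the commutator term. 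One checks that the "second-derivative'' pieces $\sum_{i,j}f_j\partial_i\partial_j(g_l)E_{i,l}$ cancel against the corresponding pieces coming from $D_1(\partial_i(g_l))E_{i,l}$ after relabelling indices, and likewise for the $g$-term, while the "first-derivative $\times$ first-derivative'' pieces $\sum \partial_i(f_j)\partial_j(g_l)E_{i,l}$ and $\sum\partial_i(g_j)\partial_j(f_l)E_{i,l}$ match exactly the two surviving commutator terms once $[E_{i,j},E_{k,l}]=\delta_{jk}E_{i,l}-\delta_{li}E_{k,j}$ is plugged in and summed.

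The main obstacle is purely organizational: keeping the index relabelling straight so that the mixed second-order partials cancel and the first-order products line up with the structure constants of $\gl_n$. There is no deep idea—Shen's choice of the correction term $\sum_{i,j}\partial_i(f_j)E_{i,j}$ is exactly engineered so that the non-tensorial failure of $f\mapsto \partial(f)$ to be a Lie-algebra map into $W_n$ is absorbed by the $\gl_n$-valued functions. Since the restriction of $\tau$ to $W_n^+$ visibly lands in $\widehat W_n^+ = W_n^+\ltimes\gl_n(A_n^+)$, the same computation gives the $W_n^+$ case, and injectivity there is again immediate from the $W_n^+$-component.
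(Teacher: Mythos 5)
The paper does not give its own proof of Lemma~\ref{lemma2.1}; it simply cites Shen's 1986 paper \cite{Sh}. Your direct verification is the standard argument and is correct: injectivity is immediate from the $W_n$-component, the $W_n$-parts of the bracket match trivially, and the $\gl_n(A_n)$-parts agree after applying the Leibniz rule, using commutativity of mixed partials so the second-order terms $f_j\partial_i\partial_j(g_l)$ and $g_j\partial_i\partial_j(f_l)$ match the $D_1(\partial_i(g_l))$ and $D_2(\partial_i(f_l))$ contributions, and contracting $[E_{i,j},E_{k,l}]=\delta_{jk}E_{i,l}-\delta_{li}E_{k,j}$ to match the remaining first-order products.
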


Note that the restriction of the above homomorphism $\tau:  W_n^+\to \widehat{
 W}_n^+$  is also a Lie algebra homomorphism.

We have the following useful formula for any $\alpha=(\a_1,\dots,\a_n)^T\in \Z^n$:
\begin{equation}\tau(t^{\a}\partial_{j})=t^{\a}\partial_{j} + \sum_{i=1}^n\ptl_{i}(t^{\a})  E_{ij},\end{equation}
where $t^\a=t_1^{\a_1}\cdots t_n^{\a_n}$.

\subsection{ The Weyl  algebras $\K_n^+, \K_n$ and   Witt modules}

The Weyl  algebra $\mathcal{K}_n^+$ is the simple associative algebra $\C[t_1,\cdots,t_n,\partial_{1},\cdots,\partial_{n}]$, while  $\mathcal{K}_n$ is the simple associative algebra $\C[t_1^{\pm1},\cdots,t_n^{\pm1},\partial_{1},\cdots,\partial_{n}]$.
%% We know that the extended Witt  algebra $\bar W_n$
%is a subalgebra of $\mathcal{K}_n^+$.

%\begin{definition} A  $\bar W_n^+$-module $V$ is  called {\it admissible} if  the action of  $ \mathcal{A}_n^+ $ is  associative, i.e.
%$$t^{r}(t^{s}v)=t^{r+s}v, \ \forall r,s\in \Z_+^n, v\in V.$$
%\end{definition}

Let $P$ be a module over the associative algebra $\mathcal{K}_n^+$ (resp. $\mathcal{K}_n$) and $M$ be a  $\gl_n$-module. Then the tensor product
$$F(P, M)=P\otimes_{\C} M$$
becomes a $ \widehat{
W}_n^+$-module (resp. $W_n$-module) with the action
$$(d+ fA)(g\otimes v)=d(g)\otimes v+fg\otimes
A(v)$$ for $d\in  W_n,\;f\in{A}_n, g\in P,\;A\in
\gl_n$ and $v\in M.$ Thus we have the following
$W_n^+$-module (resp. $W_n$-module) structure on $F(P, M)$:
$$y\circ (g\otimes v)=\tau(y)(g\otimes v), \ \forall y\in
W_n,\;g\in V ,\;v\in M.$$
Particularly we have
\begin{equation} \label{Action1}(t^{\a}\partial_{j})\circ (g\otimes v)=((t^{\a}\partial_{j})g)\otimes v+ \sum_{i=1}^n(\ptl_{i}(t^{\a})g)\otimes E_{ij}(v)\end{equation}
for all  $\alpha\in \Z^n, g\in P$ and $v\in M.$

From \begin{equation} \label{Action2}(t_j\partial_{j})\circ (g\otimes v)=(t_j\partial_{j}g)\otimes v+ g\otimes E_{jj}(v)\end{equation}
we can see  that $F(P, M)$ is a weight  module if and only if $M$ is a weight $\gl_n$-module and $P$ is a weight $\mathcal{K}_n^+$-module (resp. $\mathcal{K}_n$-module).

When $M$ is the $1$-dimensional trivial $\gl_n$-module, from Theorems 6 and  12 in \cite{TZ} we know that $F(P, M)$ is an irreducible  $W_n^+$-module (resp. $W_n$-module) if and only if $P$ is a simple $\K_n^+$-module (resp.  $\K_n$-module) that is not isomorphic to the natural $\K_n^+$-module $A_n^+$ (resp. $\K_n$-module $A_n$).

In this paper, we will determine  necessary and sufficient conditions for the module $F(P, M)$ to be irreducible over $W_n^+$ (resp. over $W_n$).

\section{Irreducibility of  the modules $F(P, M)$}

In this section, we will  find   necessary and sufficient conditions for the  module $F(P, M)$ to be irreducible over $W_n^+$ (resp. over $W_n$), as well as determine all submodules of $F(P, M)$ when
it is reducible.

\begin{theorem}\label{thm-2.1} Let $P$ be an irreducible $\K_n^+$-module (resp. $\K_n$-module),  $M$  an irreducible $\gl_n$-module that is not isomorphic to $V(\delta_r, r)$ for any $r\in \{0, 1,\cdots, n\}$. Then $F(P, M)$ is an irreducible module over $W_n^+$ (resp. over $W_n$).
\end{theorem}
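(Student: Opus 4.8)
The plan is to take an arbitrary nonzero submodule $N \subseteq F(P,M) = P \otimes M$ and show $N = F(P,M)$. The strategy is to use the Weyl-algebra action hidden inside the $W_n^+$-action (resp. $W_n$-action) to first reduce to a "pure tensor slice" of the form $P_0 \otimes M$ for some nonzero subspace $P_0$ of $P$, then use irreducibility of $M$ as a $\gl_n$-module together with the Weyl-algebra irreducibility of $P$ to bootstrap up to the whole space. Concretely, using (\ref{Action1}) with $\a = \epsilon_j$ we get $(t_j\partial_j)\circ(g\otimes v) = (t_j\partial_j g)\otimes v + g\otimes E_{jj}v$ and $(\partial_j)\circ(g\otimes v) = (\partial_j g)\otimes v$; more generally $(t^\a \partial_j)\circ(g\otimes v) - ((t^\a\partial_j)g)\otimes v = \sum_i (\partial_i(t^\a)g)\otimes E_{ij}v$. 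Combining differences of such operators lets us separate the "transport part" (the $\K_n$-action on $P$) from the "$\gl_n$-correction part". The first key step is: for a fixed nonzero $x \in N$, write $x = \sum_{k} g_k \otimes v_k$ with $\{v_k\}$ linearly independent in $M$; by applying suitable elements of $W_n^+$ (differences of two translates $t^\a\partial_j$ chosen so the $\K_n$-parts cancel on a common cyclic generator, or more robustly by applying polynomials in the $t_j\partial_j$ and in $\partial_j, t_j$ to move within $P$) I want to show $N$ contains an element of the form $g \otimes v$ with $g \ne 0$, $v \ne 0$.

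The technical device for the separation is the following: fix any $\gl_n$-correction operator. For $1 \le i \ne j \le n$ the operator $t_i\partial_j$ acts on $g\otimes v$ as $(t_i\partial_j g)\otimes v + g\otimes E_{ij}v$, and $t_i\partial_j$ itself is (for $W_n^+$) obtained from $\tau(t_i\partial_j) = t_i\partial_j + E_{ij}$; whereas the operator $t_i\partial_i$ gives the diagonal $E_{ii}$ correction. The crucial point, exactly as in the $\K_n$-module theory behind \cite{TZ}, is that the subalgebra of $\K_n^+$ (resp. $\K_n$) generated by the symbols appearing as $\partial_i(t^\a)$ together with the ambient $t^\a\partial_j$ acts on $P$ with no proper invariant subspace, because $P$ is $\K_n$-irreducible. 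So after I produce a single decomposable element $g\otimes v \in N$, I apply operators whose net effect on the $P$-factor ranges over all of $\K_n^+$ (resp. $\K_n$) while only adding terms $g'\otimes v'$ with $v'$ in the $\gl_n$-submodule generated by $v$; since $M$ is irreducible, after a further clean-up (again cancelling the $\K_n$-parts between two operators that agree on $g$) I get $P\otimes M' \subseteq N$ where $M'$ is a nonzero $\gl_n$-submodule of $M$, hence $M' = M$, hence $N = F(P,M)$.

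The real obstacle — and the place where the hypothesis $M \not\cong V(\delta_r,r)$ enters — is the very first reduction: showing $N$ contains a nonzero decomposable tensor, equivalently, that the "$\gl_n$-correction" cannot be algebraically trapped. If one writes $x = \sum_{k=1}^m g_k\otimes v_k \in N$ with $m$ minimal and $\{v_k\}$ independent, applying $(t_i\partial_j)\circ$ and subtracting the version of $x$ transported by the plain $\K_n$-action forces, by minimality, a relation of the form $\sum_k (\partial_\bullet(\cdot)\,g_k)\otimes v_k = (\text{scalar})\sum_k g_k\otimes v_k$ after adjusting, which says the map $v_k \mapsto E_{ij}v_k$ is "scalar plus derivation-controlled" on the span of the $v_k$; pushing this for all $i,j$ shows the span of $\{v_k\}$ carries a $\gl_n$-action that is a quotient/sub of the module of "coefficients of a $1$-form", i.e. forces $M \cong V(\delta_r,r)$ for some $r$ (these being exactly the modules appearing in the de Rham-type complex in the introduction, where the differentials $\pi_k$ are nonzero $W_n^+$-maps and genuinely obstruct irreducibility). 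Since we assumed $M$ is none of these, $m=1$ is forced and the bootstrap goes through. I expect the delicate bookkeeping to be: (i) choosing the right finite families of operators $t^\a\partial_j$ so that their $\K_n$-parts cancel on whatever generator is in play, and (ii) making precise the claim that a $\gl_n$-module annihilated by enough such "correction relations" must be one of the $V(\delta_r,r)$ — this is essentially a computation with the defining relations of $\gl_n$ on the candidate module structure and is where I would spend the most care.
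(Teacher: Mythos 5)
Your high-level roadmap — the $\gl_n$-correction in Shen's embedding is the obstruction, and the exceptional $M$ are the $V(\delta_r,r)$ — matches the paper's, but the concrete argument is missing its engine and takes a wrong turn. The engine in the paper is the identity that the element
\[
T_{\a,i,j}=\tfrac12\Bigl((t^{2\epsilon_l}\partial_i)(t^{\a}\partial_j)-2(t^{\epsilon_l}\partial_i)(t^{\a+\epsilon_l}\partial_j)+\partial_i(t^{\a+2\epsilon_l}\partial_j)\Bigr)\in U(W_n^+)
\]
acts on $F(P,M)$ purely as $p\otimes v\mapsto t^{\a}p\otimes(\delta_{li}E_{lj}-E_{li}E_{lj})v$; it is produced by computing $(t^{m\epsilon_l}\partial_i)\circ(t^{\beta-m\epsilon_l}\partial_j)$ for $m=0,1,2$ and extracting the coefficient of $m^2$. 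You gesture at ``choosing operators so the $\K_n$-parts cancel'' but never exhibit one, and without a concrete element of $U(W_n^+)$ whose action decouples into (multiplication on $P$)$\,\otimes\,$(a fixed element of $U(\gl_n)$) the rest of your plan cannot start. Moreover, the reduction you aim for — forcing the minimal length of $\sum_{k=1}^m g_k\otimes v_k\in N$ down to $m=1$, i.e.\ finding a decomposable tensor in $N$ — is not how the argument goes and applying $T_{\a,i,j}$ does not shorten the sum. What actually happens is: after $T_{\a,i,j}$ and the Jacobson density theorem one gets $P\otimes(\delta_{li}E_{lj}-E_{li}E_{lj})w_k\subseteq N$ for each $k$; one then notes $M_1=\{w\in M: P\otimes w\subseteq N\}$ is a $\gl_n$-submodule, hence $M_1=0$ for proper $N$, forcing $(\delta_{li}E_{lj}-E_{li}E_{lj})w_k=0$; and iterating (first hitting $w_k$ with $E_{sm}$'s, then using irreducibility of $M$) yields $(\delta_{li}E_{lj}-E_{li}E_{lj})M=0$ for all $l,i,j$.

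The second genuine gap is your final step. You assert that the accumulated relations on ``the span of $\{v_k\}$'' force $M\cong V(\delta_r,r)$, treating this as a routine check with the defining relations. But the span of finitely many vectors in an infinite-dimensional irreducible $M$ is not a $\gl_n$-submodule, so there is no induced module structure on it to compare with anything; and even once one has the global relations $(\delta_{li}E_{lj}-E_{li}E_{lj})M=0$, concluding $M\cong V(\delta_r,r)$ is not elementary. The paper specializes to $(E_{ii}-1)E_{ii}M=0$ and $E_{li}^2M=0$ for $i\ne l$, and then invokes Lemma~2.3 of \cite{LZ} to conclude that $M$ is \emph{finite-dimensional}, hence a highest weight module $V(\mu,b)$; only then does $\mu_i(\mu_i-1)=0$ together with dominance force $\mu=\delta_r$ and $b=r$, giving the contradiction. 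Without a finite-dimensionality input of this type your argument cannot close.
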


\begin{proof} We will prove this result only for $W_n^+$ since the proof is valid also for $W_n$, even simpler.

Let $V$ be a nonzero proper $W_n^+$-submodule of $F(P,M)$.
 Let $\sum_{k =1}^q p_k\otimes w_k$ be a nonzero element in $V$. (Later we will assume  that $p_1, \dots, p_q$ are linear independent.)

\

\noindent {\bf Claim 1}.  For any $u\in \K_n^+$, $1\le i, j, l\le n$, we have
\begin{equation*}\sum_{k =1}^q(u p_k)\otimes (\delta_{li}E_{lj}- E_{li} E_{lj})w_k\in V.\end{equation*}

 For any $m\in\{0, 1,2\}$,  and   $\beta=(b_1,\ldots,b_n)\in \Z_+^n$ with $b_l\ge 2$,   we have
 $$\aligned& (t^{m\epsilon_l}\partial_i)\hskip -3pt \circ \hskip -3pt  (t^{\beta-m\epsilon_l}\partial_j)\hskip -3pt \circ \hskip -3pt  \sum_{k =1}^q (p_k\otimes w_k)\\
& =\sum_{k =1}^q(t^{m\epsilon_l}\partial_i)\hskip -3pt \circ \hskip -3pt \big(t^{\beta-m\epsilon_l}\partial_j p_k\otimes w_k+\sum_{s=1}^n\ptl_s(t^{\beta-m\epsilon_l})p_k\otimes E_{sj}w_k\big)\endaligned$$ $$\aligned
& =\hskip -3pt \sum_{k =1}^q(t^{m\epsilon_l}\partial_i)\hskip -3pt  \circ\hskip -3pt   \big(t^{\beta-m\epsilon_l}\partial_j p_k\otimes w_k\hskip -3pt +\hskip -3pt  \sum_{s=1}^n(\be_s-m\delta_{s,l})(t^{\beta-m\epsilon_l-\epsilon_s}p_k\otimes E_{sj}w_k)\big)\\
& =\sum_{k =1}^q(t^{m\epsilon_l}\partial_it^{\beta-m\epsilon_l}\partial_j p_k\otimes w_k+mt^{\beta-\epsilon_l}\partial_j p_k\otimes E_{li}w_k)\\
&  \  \  \  +\sum_{k =1}^q\sum_{s=1}^n(\be_s-m\delta_{s,l})\big(t^{m\epsilon_l}\partial_it^{\beta-m\epsilon_l-\epsilon_s}p_k\otimes E_{sj}w_k
\\
&  \  \  \  \  \  \ +mt^{\beta-\epsilon_l-\epsilon_s}p_k\otimes E_{li}E_{sj}w_k\big)\endaligned $$
 $$\aligned&   =\hskip -3pt\sum_{k =1}^q\hskip -3pt\Big((\be_i-m\delta_{il})t^{\be-\epsilon_i}\ptl_jp_k\otimes w_k\hskip -3pt+\hskip -3ptt^\be\ptl_i\ptl_jp_k\otimes w_k\hskip -3pt+\hskip -3ptmt^{\beta-\epsilon_l}\partial_j p_k\otimes E_{li}w_k\Big)\\
&\ +\sum_{k =1}^q\sum_{s=1}^n(\be_s-m\delta_{s,l})\Big((\be_i-m\delta_{il}-\delta_{is})t^{\be-\epsilon_s-\epsilon_i}p_k\otimes E_{sj}w_k
\\
&\  \  \ +t^{\be-\epsilon_s}\ptl_ip_k\otimes E_{sj}w_k+mt^{\beta-\epsilon_l-\epsilon_s}p_k\otimes E_{li}E_{sj}w_k\Big)\\
&=  m^2 \sum_{k =1}^q t^{\beta-2\epsilon_l}p_k\otimes (\delta_{li}E_{lj}- E_{li} E_{lj})w_k+m u_1+u_0\in V,\endaligned $$ where $u_1,u_0$ is determined by $\beta,i,j, w$ and independent of $m$. Taking $m=0,1,2$ and considering the coefficient of $m^2$,  we have
that
\begin{equation}\label{T1}T_{\a,i,j}\hskip -3pt \circ \hskip -3pt (\sum_{k =1}^q p_k\otimes w_k)= \sum_{k =1}^q t^{\a}p_k\otimes (\delta_{li}E_{lj}- E_{li} E_{lj})w_k\in V, \end{equation}
for all $\a\in \Z_+^n, l=1,2,\ldots,n$, where $$T_{\a,i,j}=\frac{1}{2}((t^{2\epsilon_l}\partial_i)(t^{\a}\partial_j))-2(t^{\epsilon_l}\partial_i)(t^{\a+\epsilon_l}\partial_j)+(\partial_i)(t^{\a+2\epsilon_l}\partial_j)\in U(W_n^+).$$
Consequently,  from the action of $\ptl_s$ on $M$, we see that
 \begin{equation}\label{T2}\sum_{k =1}^q(\ptl_s t^{\a}p_k)\otimes (\delta_{li}E_{lj}- E_{li} E_{lj})w_k\in V, \ \ \forall \a\in \Z_+^n, s=1,\dots,n.\end{equation}

 From (\ref{T1}),  (\ref{T2}) and the fact that $\K_n^+$ is generated by $t_i, \ptl_s$, with $i, s=1,\dots,n$, we deduce that
 \begin{equation}\label{T3}\sum_{k =1}^q(up_k)\otimes (\delta_{li}E_{lj}- E_{li} E_{lj})w_k\in V, \ \ \forall\  u\in \K_n^+.\end{equation} Then Claim 1 follows.

\

\noindent  {\bf Claim 2}.  If $p_1, \dots, p_q$ are linear independent, then for any  $1\le i, j, l\le n$, and any $k=1,\dots,q$,  we have
  $$ (\delta_{li}E_{lj}- E_{li} E_{lj})w_k=0.$$

 Since $P$ is an irreducible $\K_n^+$-module,  by the density theorem in ring theory,  for any
$p\in P$,  there is a $u_k\in \K_n^+$ such that
$$u_kp_i=\delta_{ik}p, \ i=1,\dots, q.$$
From Claim 1 we see that  $P \otimes (\delta_{li}E_{lj}- E_{li} E_{lj})w_k\subseteq V.$

Let $M_1=\{w\in M| P \otimes w\subseteq V\}$.  For any $ p\otimes w\in V$, from
$$(t_j\partial_i)\hskip -3pt \circ \hskip -3pt (p\otimes w)=(t_j\partial_i p\otimes w+ p\otimes  E_{ji}(w))\in V, \forall \ w\in M_1,$$
 we see that $M_1$ is a $\gl_n$-submodule of $M$, which has to be $0$ or $M$.
Since $V$ is a proper submodule of $F(P,M)$, we must have that
$$M_1=0.$$ Claim 2 follows.

\

From now on we assume that $p_1, \dots, p_q$ are linear independent.

\noindent {\bf Claim 3}.   For any  $1\le i, j, l\le n$,  we have  $(\delta_{li}E_{lj}- E_{li} E_{lj})(M)=0$.

From $$t_s\ptl_m \hskip -3pt \circ \hskip -3pt  (\sum_{k =1}^q p_k\otimes w_k)\in V,$$ we have
$$\sum_{k =1}^q (t_s\ptl_m p_k\otimes w_k+  p_k\otimes E_{sm}w_k)\in V.$$

By Claim 1,  we obtain $$\sum_{k =1}^q (ut_s\ptl_m p_k\otimes (\delta_{li}E_{lj}- E_{li} E_{lj})w_k+  up_k\otimes (\delta_{li}E_{lj}- E_{li} E_{lj})E_{sm}w_k)\in V.$$
By Claim 2 we have
%$$\sum_{k =1}^q ut_s\ptl_m p_k\otimes (\delta_{li}E_{lj}- E_{li} E_{lj})w_k\in  V.$$ Then
$$\sum_{k =1}^q up_k\otimes (\delta_{li}E_{lj}- E_{li} E_{lj})E_{sm}w_k\in V,$$ for all $u\in \K_n^+$.
Since $p_i$'s are linearly independent,  by taking different $u$ in the above formula, we deduce that  $$P \otimes (\delta_{li}E_{lj}- E_{li} E_{lj})E_{sm}w_k\in V, \ \  k=1,\dots,q.$$
This means that
$$(\delta_{li}E_{lj}- E_{li} E_{lj})E_{sm}w_k\subseteq M_1,\   \forall\  l, i, j, s, m . $$
Now $M_1=0$. So  $$(\delta_{li}E_{lj}- E_{li} E_{lj})E_{sm}w_k=0, \forall\  l, i, j, s, m, k. $$
By repeatedly doing this procedure we deduce that
 $$(\delta_{li}E_{lj}- E_{li} E_{lj})U(\gl_n)w_k=0, \forall\  l, i, j,  k. $$
Since $M$ is an irreducible $\gl_n$-module,   we obtain that $$(\delta_{li}E_{lj}- E_{li} E_{lj})M=0, \forall\  l, i, j. $$
Claim 3 follows.

Consequently $$(E_{ii}-1)E_{ii}M=E_{li}^2M=0,  \forall\ i, l \ \text{with}\   i\neq l.$$

By Lemma 2.3 in \cite{LZ}, we see that   $M$ is finite dimensional, i.e., $M$ is a highest weigh $\gl_n$-module with highest weight $\mu\in \Lambda^+$.
 Suppose that  $v_\mu$ is a highest weight vector of $M$, i.e.,
$E_{ii}v_\mu=\mu_i v_\mu$ for $i=1,\dots, n$. Then $\mu_i (\mu_i -1)=0$, i.e., $\mu_i=0$ or $1$. Since $\mu\in \Lambda^+$, we see that $\mu_i\ge \mu_{i+1}$. Thus there is an $r: 1\le r\le n$ such that $\mu_i=1$ for all $i\le r$, and $\mu_i=0$ for all $i> r$. So  $M$ has highest weight $\delta_r$ with $b=r$, contradicting the assumption on $M$. The theorem follows.
%Suppose that  the highest weight of $M$ is $\delta_1$, i.e., $M=\C^n$.   Let
%$$ N=\oplus_{i=1}^n(\sum_{i=1}^nm_it^{m-\epsilon_i}\otimes \epsilon_i).$$
%From the following equality
%$$\aligned &t^\a\ptl_j(\sum_{i=1}^nm_it^{m-\epsilon_i}\otimes \epsilon_i)\\
%=&
%\sum_{i=1}^n(m_j-\delta_{ij})m_it^{m+\a-\epsilon_i-\epsilon_j}\otimes \epsilon_i
%+m_j\sum_{i=1}^n \a_it^{m+\a-\epsilon_i-\epsilon_j}\otimes \epsilon_i\\
%=&m_j\sum_{i=1}^n(m_i+\a_i-\delta_{ij})t^{m+\a-\epsilon_i-\epsilon_j}\otimes \epsilon_i,\endaligned\\$$
%we can see that $N$ is $W_n^+$-submodule of $\widehat{M}$.
\end{proof}

We know that $V(\delta_0,b)=\C v$  is  the one dimensional $\gl_n$-module such that the identity matrix $I$ acts by the scalar $b$. Note that
$V(\delta_0,n)=V(\delta_n,n)$.
As vector spaces $F(P, V(\delta_0,b))\cong P$. By (\ref{Action1}), the action of $W_n^+$ (resp. $W_n$) on $F(P, V(\delta_0,b))$ is given by
$$ (t^{\a}\partial_{j})\hskip -3pt \circ \hskip -3pt (g)=t^{\a}(\partial_{j}g)+ \frac{b\a_j}{n}t^{\a-\epsilon_j}(g), \ \ \forall\ g\in P.$$  By Theorem \ref{thm-2.1},  the   module  $F(P, V(\delta_0,b)) $  over $W_n^+$   (resp. $W_n$)  is irreducible  if $b\neq 0, n$.

Next we will study the structure of the $W_n^+$-modules (resp. $W_n$-modules) $F(P, V(\delta_k,k))$ for $k=0, 1, 2, \cdots, n$.

\begin{lemma}\label{2.2}Let $P$ be a module over the associative algebra   $\mathcal{K}_n^+$  (resp. $\mathcal{K}_n$). Then for $k=0,1,2,\ldots,n-1$,  we have  $W_n^+$-module (resp. $W_n$-module) homomorphisms
 $$\aligned \pi_k:\,\, &F(P, V(\delta_k,k))\rightarrow  F(P,V(\delta_{k+1},k+1)),\\ &p\otimes w\mapsto \sum_{l=1}^n(\partial_l\cdot p)\otimes  (\epsilon_l\wedge w), \forall p\in P, w\in M.\endaligned$$ Furthermore,
 $\pi_{k+1}\pi_k=0.$\end{lemma}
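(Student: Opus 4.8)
The plan is to verify both assertions by a direct computation using only the explicit action formula (\ref{Action1}); nothing deep is involved, and the case of $W_n$ is word-for-word the same as that of $W_n^+$ (with $\alpha$ now ranging over $\Z^n$ instead of $\Z_+^n$), since the computation uses only the Weyl relations and the $\gl_n$-module structure, so I describe only $W_n^+$. Well-definedness of $\pi_k$ is clear, as the defining formula is $\C$-bilinear in $(p,w)$.

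The relation $\pi_{k+1}\pi_k=0$ is immediate: applying $\pi_{k+1}$ to $\pi_k(p\otimes w)=\sum_{l=1}^n(\partial_l\cdot p)\otimes(\epsilon_l\wedge w)$ yields $\sum_{l,m=1}^n(\partial_m\partial_l\cdot p)\otimes(\epsilon_m\wedge\epsilon_l\wedge w)$; since the $\partial$'s commute in $\K_n^+$ while $\epsilon_m\wedge\epsilon_l$ is alternating, the summand is antisymmetric under $l\leftrightarrow m$ and the sum vanishes.

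For the homomorphism property: because $W_n^+$ is spanned by the operators $t^\alpha\partial_j$ and both $y\mapsto\pi_k(y\circ-)$ and $y\mapsto y\circ\pi_k(-)$ are linear in $y$, it suffices to check $\pi_k\big((t^\alpha\partial_j)\circ(p\otimes w)\big)=(t^\alpha\partial_j)\circ\pi_k(p\otimes w)$ for all $\alpha$ and $j$. I would expand the left side with (\ref{Action1}), apply $\pi_k$, and reorganize using the Weyl relations $\partial_l t^\alpha=t^\alpha\partial_l+\partial_l(t^\alpha)$ and $\partial_l\partial_j=\partial_j\partial_l$; and expand the right side with (\ref{Action1}) together with $E_{ij}(\epsilon_l\wedge w)=\delta_{lj}\,\epsilon_i\wedge w+\epsilon_l\wedge E_{ij}w$ (which follows from $E_{ij}\epsilon_l=\delta_{lj}\epsilon_i$ and the fact that $\gl_n$ acts on the exterior powers of $\C^n$ by derivations), relabelling the summation index in the $\delta_{lj}$-term. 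After cancellation, all contributions on the two sides coincide except for a single leftover term on the left:
$$\sum_{i,l=1}^n\big(\partial_l\partial_i(t^\alpha)\cdot p\big)\otimes\big(\epsilon_l\wedge E_{ij}w\big),$$
where $\partial_l\partial_i(t^\alpha)$ denotes the second partial derivative of the monomial $t^\alpha$, acting on $p$ as a scalar multiple of a monomial. So it remains only to show this term is $0$.

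This is the one genuine point, and the step I expect to be the (still elementary) obstacle: the coefficient $\partial_l\partial_i(t^\alpha)$ is \emph{symmetric} under $i\leftrightarrow l$, whereas $\epsilon_l\wedge E_{ij}w$ is \emph{antisymmetric} under $i\leftrightarrow l$. Indeed, for a monomial $w=\epsilon_{m_1}\wedge\cdots\wedge\epsilon_{m_k}$ the vector $\epsilon_l\wedge E_{ij}w$ is obtained by deleting a factor $\epsilon_j$ from $w$, putting $\epsilon_i$ in its place, and wedging $\epsilon_l$ on the left; interchanging $i$ and $l$ produces the same wedge with two of its factors transposed, so $\epsilon_i\wedge E_{lj}w=-\epsilon_l\wedge E_{ij}w$, and this extends to all $w$ by linearity. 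Pairing a symmetric coefficient against an antisymmetric vector, the leftover sum vanishes, which completes the proof that $\pi_k$ is a $W_n^+$-module (resp. $W_n$-module) homomorphism. Everything except this last cancellation is routine bookkeeping with (\ref{Action1}); the only pitfall is to recognize that the leftover term is not zero term by term and must be killed by the symmetry/antisymmetry pairing.
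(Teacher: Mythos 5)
Your proof is correct and follows essentially the same route as the paper's: a direct expansion of both $\pi_k\big((t^\alpha\partial_j)\circ(p\otimes w)\big)$ and $(t^\alpha\partial_j)\circ\pi_k(p\otimes w)$ via the action formula, with the residual term (symmetric in the two free indices on the coefficient side) killed by the antisymmetry identity $\epsilon_l\wedge E_{ij}w+\epsilon_i\wedge E_{lj}w=0$, which the paper states verbatim and which you justify by the transposition argument. The observation that $\pi_{k+1}\pi_k=0$ by pairing the symmetric $\partial_m\partial_l$ against the antisymmetric $\epsilon_m\wedge\epsilon_l$ is also exactly what the paper intends by "it is easy to see".
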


\begin{proof}
From $$(t^{\beta}\partial_i)\hskip -3pt \circ \hskip -3pt (p\otimes w)=((t^{\beta}\partial_i)\cdot p)\otimes w+\sum_{s=1}^n (\partial_s(t^{\beta})\cdot p)\otimes E_{si}w,$$
we have $$\pi_k((t^{\beta}\partial_i)\hskip -2pt \circ \hskip -2pt (p\otimes w))\hskip -3pt =\hskip -4pt \sum_{l=1}^n\hskip -3pt  (\partial_l t^{\beta}\partial_i) p\otimes (\epsilon_l\wedge w)\hskip -3pt +\hskip -3pt \sum_{l=1}^n\hskip -3pt \sum_{s=1}^n \hskip -3pt \partial_l \partial_s(t^{\beta}) p\otimes \epsilon_l\wedge E_{si}w.$$
We compute
$$\aligned& (t^{\beta}\partial_i)\hskip -3pt \circ \hskip -3pt \pi_k(p\otimes w)=(t^{\beta}\partial_i)\hskip -3pt \circ \hskip -3pt (\sum_{l=1}^n(\partial_l\cdot p)\otimes  (\epsilon_l\wedge w))\\
&=\sum_{l=1}^n( (t^{\beta}\partial_i)\cdot\partial_l\cdot p)\otimes  (\epsilon_l\wedge w) +\sum_{l=1}^n\sum_{s=1}^n \partial_s(t^{\beta})(\partial_l\cdot p)\otimes E_{si}(\epsilon_l\wedge w)\\
&=\sum_{l=1}^n( (\partial_lt^{\beta}\partial_i-\partial_l(t^{\beta})\partial_i)\cdot p)\otimes  (\epsilon_l\wedge w)\\
& \  \  \ +\sum_{l=1}^n\sum_{s=1}^n \partial_s(t^{\beta})(\partial_l\cdot p)\otimes (E_{si}\epsilon_l)\wedge w\\
&\ \  \ \ \ +\sum_{l=1}^n\sum_{s=1}^n \partial_s(t^{\beta})(\partial_l\cdot p)\otimes (\epsilon_l\wedge E_{si} w)\\
&=\sum_{l=1}^n ((\partial_lt^{\beta}\partial_i-\partial_l(t^{\beta})\partial_i)\cdot p)\otimes  (\epsilon_l\wedge w)\\
&\  \ +\sum_{s=1}^n \partial_s(t^{\beta})(\partial_i\cdot p)\otimes (\epsilon_s \wedge w)+\sum_{l=1}^n\sum_{s=1}^n (\partial_l\cdot \partial_s(t^{\beta})\cdot p)\otimes (\epsilon_l\wedge E_{si} w)\\ &\  \ -\sum_{l=1}^n\sum_{s=1}^n (\partial_l(\partial_s(t^{\beta}))\cdot p)\otimes (\epsilon_l\wedge E_{si} w)\\
&=\pi_k((t^{\beta}\partial_i)\hskip -3pt \circ \hskip -3pt (p\otimes w)).\endaligned $$
 Note that in the last equation we have used $\epsilon_l\wedge E_{si} w+\epsilon_s\wedge E_{li} w=0$ for all $s,l=1,2,\ldots,n.$
 It is easy to see that $\pi_{k+1}\pi_k=0.$
\end{proof}

\begin{corollary} Let $P$ be an irreducible module over the associative algebra $\mathcal{K}_n^+$  (resp. $\mathcal{K}_n$-modules).  Then the $W_n^+$-module (resp. $W_n$-module) $F(P, V(\delta_r,r))$ is not irreducible for $r=1,2,\ldots,n-1$.\end{corollary}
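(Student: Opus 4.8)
The plan is to exhibit, for each $r$ with $1\le r\le n-1$, an explicit nonzero proper $W_n^+$-submodule (resp. $W_n$-submodule) of $F(P,V(\delta_r,r))$, thereby contradicting irreducibility. The natural candidates come from the complex constructed in Lemma \ref{2.2}: the image of $\pi_{r-1}$ and the kernel of $\pi_r$. Since $\pi_{r+1}\pi_r=0$ (Lemma \ref{2.2}), the image $\operatorname{im}\pi_{r-1}$ is contained in $\ker\pi_r$, so if we can show $\ker\pi_r$ is a proper submodule and is nonzero, we are done. Both $\ker\pi_r$ and $\operatorname{im}\pi_{r-1}$ are $W_n^+$-submodules because $\pi_{r-1},\pi_r$ are $W_n^+$-module homomorphisms by Lemma \ref{2.2}.

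First I would show $\ker\pi_r\neq 0$. Pick any nonzero $p\in P$ and consider the element $p\otimes(\epsilon_{i_1}\wedge\cdots\wedge\epsilon_{i_r})$; applying $\pi_r$ gives $\sum_{l=1}^n(\partial_l p)\otimes(\epsilon_l\wedge\epsilon_{i_1}\wedge\cdots\wedge\epsilon_{i_r})$, and only the at most $n-r$ indices $l\notin\{i_1,\dots,i_r\}$ survive. A slicker route: $\pi_r\pi_{r-1}=0$ shows $\operatorname{im}\pi_{r-1}\subseteq\ker\pi_r$, so it suffices to check $\operatorname{im}\pi_{r-1}\neq 0$, i.e. that $\pi_{r-1}$ itself is not the zero map. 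But $\pi_{r-1}(p\otimes(\epsilon_2\wedge\cdots\wedge\epsilon_r))=\sum_l(\partial_l p)\otimes(\epsilon_l\wedge\epsilon_2\wedge\cdots\wedge\epsilon_r)$, and the $l=1$ term is $(\partial_1 p)\otimes(\epsilon_1\wedge\cdots\wedge\epsilon_r)$; choosing $P$-elements on which some $\partial_l$ acts nontrivially (which is automatic since $P$ is an irreducible $\K_n^+$-module, hence not annihilated by all $\partial_l$ — otherwise $P$ would be a module over $\C[t_1,\dots,t_n]$ with all derivations zero, impossible for a simple $\K_n^+$-module) shows $\operatorname{im}\pi_{r-1}\neq 0$. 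Hence $\ker\pi_r\supseteq\operatorname{im}\pi_{r-1}\neq 0$.

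Next I would show $\ker\pi_r\neq F(P,V(\delta_r,r))$, i.e. $\pi_r$ is not identically zero. For $r\le n-1$ pick distinct indices $i_1,\dots,i_r$ and an index $l_0\notin\{i_1,\dots,i_r\}$; then $\pi_r(p\otimes(\epsilon_{i_1}\wedge\cdots\wedge\epsilon_{i_r}))$ has the term $(\partial_{l_0}p)\otimes(\epsilon_{l_0}\wedge\epsilon_{i_1}\wedge\cdots\wedge\epsilon_{i_r})$, which is linearly independent from all the other terms in that sum (they involve different basis wedges of $\bigwedge^{r+1}\C^n$). Choosing $p$ with $\partial_{l_0}p\neq 0$ — again possible since not all $\partial_{l_0}$ annihilate the simple module $P$ — we get $\pi_r(p\otimes w)\neq 0$. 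Therefore $\ker\pi_r$ is a proper submodule.

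Combining the two steps, $\ker\pi_r$ is a nonzero proper $W_n^+$-submodule (resp. $W_n$-submodule) of $F(P,V(\delta_r,r))$ for each $r=1,\dots,n-1$, so this module is reducible. The only mild subtlety — the ``main obstacle'' — is the linear-independence bookkeeping needed to conclude $\pi_r\neq 0$ and $\pi_{r-1}\neq 0$ from irreducibility of $P$, namely ruling out the degenerate possibility that $\partial_l$ acts as $0$ on $P$ for some (or all) $l$; this is handled by noting that $\K_n^+$ is simple, so a simple $\K_n^+$-module is faithful, hence no $\partial_l$ acts as zero. Everything else is immediate from Lemma \ref{2.2}.
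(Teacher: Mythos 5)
Your proof is correct and follows essentially the same route as the paper: the paper exhibits $\operatorname{im}\pi_{r-1}$ as a nonzero proper submodule of $F(P,V(\delta_r,r))$ using the complex from Lemma~\ref{2.2}, which amounts to the same computation as your use of $\ker\pi_r$, since both reduce to $\operatorname{im}\pi_{r-1}\neq 0$, $\pi_r\pi_{r-1}=0$, and $\pi_r\neq 0$. The only cosmetic difference is in how you rule out $\partial_l$ acting as zero on $P$: the paper deduces it directly from $0=\partial_l(t_lp)=p+t_l\partial_l p$, whereas you invoke faithfulness of simple modules over the simple ring $\K_n^+$; both are valid.
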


\begin{proof} If $\partial_i(P)=0$ for some $i=1,2,\cdots, n$, from
$$0=\partial_i(t_ip)=p+t_i(\partial_i(p)),\forall p\in P$$
we deduce that $P=0$ which contradicts the fact that $P$ is simple. Thus $\partial_i(P)\ne 0$ for  $i=1,2,\cdots, n-1, n$.
From Lemma \ref{2.2}, we see that ${\rm Im} \pi_i\ne0$  for $i=1,2,\cdots, n-1$.
 Then from $\pi_{r+1}\pi_r=0$ for $r=0,\ldots,n-2$, we have ${\rm Im} \pi_r$ is a proper submodule of $F(P,V(\delta_{r+1},r+1))$. Thus the module $F(P, V(\delta_r,r))$ is not simple for $r=1,2,\ldots,n-1$.
\end{proof}

For simplicity, let $L_n(P,r)=\pi_{r-1}(F(P, V(\delta_{r-1},r-1))$ for $r=1,2,\ldots,n$ and set $L_n(P,0)=0$.  By the definition of $\pi$,  we can see that $L_n(P,r)$  is spanned by
\begin{equation}\label{2.4'}\sum_{k=1}^n(\ptl_kp)\otimes \epsilon_k\wedge \epsilon_{i_2}\wedge\cdots\wedge \epsilon_{i_r}=\sum_{k=1}^n(\ptl_k p)\otimes E_{kj}w,  \end{equation} where   $p\in P$, and   $j$ is chosen so that $w=\epsilon_j\wedge  \epsilon_{i_2}\wedge\cdots\wedge \epsilon_{i_r}\ne 0$. Therefore for all $j=0,1,2,\ldots,n; p\in P, w\in V(\delta_r,r)$ we have
\begin{equation}\sum_{k=1}^n(\ptl_k p)\otimes E_{kj}w\in L_n(P,r), \end{equation}
\begin{equation}\label{2.6} t^{\gamma}\partial_j(p\otimes w)+L_n(P,r)=\sum_{s=1}^n t^{\gamma}\partial_s p\otimes (\delta_{j,s}w-E_{sj}w)+L_n(P,r),\end{equation}
for any $\gamma\in\Z^n$.
Let $$\tilde{L}_n(P,r)=\{v\in F(P, V(\delta_r,r))  \ | \ W_n^+ v\subseteq L_n(P,r)\}$$
for   $W_n^+$, and  $$\tilde{L}_n(P,r)=\{v\in F(P, V(\delta_r,r))  \ | \ W_n v\subseteq L_n(P,r)\}$$
for   $W_n$.
 Since $W_n^+=[W_n^+, W_n^+]$ and $W_n=[W_n, W_n]$, we see
 \begin{equation}\label{2.9}\tilde{L}_n(P,r)=\{ v\in F(P, V(\delta_r,r))  \ | \ W_n^+ v\subseteq \tilde{L}_n(P,r)\}\end{equation}
for   $W_n^+$, and  $$\tilde{L}_n(P,r)=\{ v\in F(P, V(\delta_r,r))  \ | \ W_nv\subseteq \tilde{L}_n(P,r)\}$$
for   $W_n$. It is clear that $\tilde{L}_n(P,r)/L_n(P,r)$ is either zero or a trivial module.
%Let $\mh=\span\{t_i\ptl_i \mid i=1,\cdots, n\}$.

\begin{lemma}\label{lemma-2.5}Let $P$ be an irreducible  module over the associative algebra   $\mathcal{K}_n^+$ (resp.  $\mathcal{K}_n$).  Then the module  $F(P,V(\delta_r,r))/\tilde{L}_n(P,r)$ over $\W_n^+$  (resp. $\W_n$) is either 0 or irreducible for any $r=0,1,2,\ldots,n$.\end{lemma}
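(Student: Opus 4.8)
The plan is to mimic the argument structure of Theorem \ref{thm-2.1}, but now working \emph{modulo} the submodule $\tilde L_n(P,r)$ rather than with $F(P,M)$ itself. Let $\bar V$ be a nonzero proper submodule of $F(P,V(\delta_r,r))/\tilde L_n(P,r)$ and let $V$ be its preimage in $F(P,V(\delta_r,r))$, so $\tilde L_n(P,r)\subsetneq V\subsetneq F(P,V(\delta_r,r))$ and $V$ is a $W_n^+$-submodule (resp.\ $W_n$-submodule) properly containing $L_n(P,r)$ and $\tilde L_n(P,r)$. Pick a nonzero element $\sum_{k=1}^q p_k\otimes w_k$ of $V$ with $p_1,\dots,p_q$ linearly independent and not all $w_k$ lying in the "trivial" part. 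The goal is to run Claims~1--3 of Theorem~\ref{thm-2.1} verbatim, with every membership "$\cdots\in V$" now interpreted in $F(P,V(\delta_r,r))$; the operator $T_{\a,i,j}\in U(W_n^+)$ and the formula (\ref{T1}) still produce $\sum_k t^\a p_k\otimes(\delta_{li}E_{lj}-E_{li}E_{lj})w_k\in V$, and applying $\partial_s$ and using that $\K_n^+$ is generated by $t_i,\partial_s$ gives (\ref{T3}), i.e.\ Claim~1 holds with $V$ in place of the $V$ there.

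Next I would establish the analogue of Claim~2: if $p_1,\dots,p_q$ are linearly independent, then $(\delta_{li}E_{lj}-E_{li}E_{lj})w_k\in W$ for every $k$, where $W\subseteq V(\delta_r,r)$ is the "exceptional weight space" appearing in the definition of $L_n(P,r)$. Concretely, using the density theorem choose $u_k\in\K_n^+$ with $u_kp_i=\delta_{ik}p$ for arbitrary $p\in P$; Claim~1 then gives $P\otimes(\delta_{li}E_{lj}-E_{li}E_{lj})w_k\subseteq V$. Now set $M_1=\{w\in V(\delta_r,r)\mid P\otimes w\subseteq V\}$. The computation with $t_j\partial_i$ used in Theorem~\ref{thm-2.1} shows $M_1$ is a $\gl_n$-submodule of $V(\delta_r,r)$; since $V(\delta_r,r)$ is irreducible, $M_1$ is $0$ or all of $V(\delta_r,r)$, and the latter forces $V=F(P,V(\delta_r,r))$, a contradiction. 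Hence $M_1=0$, giving $(\delta_{li}E_{lj}-E_{li}E_{lj})w_k=0$ for all $k,l,i,j$; then Claim~3 goes through word for word, yielding $(\delta_{li}E_{lj}-E_{li}E_{lj})V(\delta_r,r)=0$, whence $(E_{ii}-1)E_{ii}V(\delta_r,r)=E_{li}^2V(\delta_r,r)=0$ and, by Lemma~2.3 of \cite{LZ}, $V(\delta_r,r)$ has highest weight $\delta_s$ for some $s$ with $b=s$. But the module is $V(\delta_r,r)$, so $s=r$ — and this is exactly the one case we have quotiented out. The point is that for $M=V(\delta_r,r)$ the submodule $L_n(P,r)$ (the image of $\pi_{r-1}$) is precisely the obstruction that Theorem~\ref{thm-2.1} ran into, so after passing to $F(P,V(\delta_r,r))/\tilde L_n(P,r)$ the same argument no longer produces a contradiction from the submodule $V$ but instead forces $V\subseteq\tilde L_n(P,r)$, i.e.\ $\bar V=0$.

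The main technical point, and the step I expect to require the most care, is making the reduction "$M_1=0$" actually close the loop in the quotient setting: one must verify that when $M=V(\delta_r,r)$ the hypothesis $\tilde L_n(P,r)\subsetneq V$ together with $M_1=V(\delta_r,r)$ really is contradictory, and conversely that $M_1=0$ combined with the conclusion of Claim~3 forces the starting element $\sum_k p_k\otimes w_k$ into $\tilde L_n(P,r)$. This is where (\ref{2.4'})--(\ref{2.9}) are used: Claim~3 gives $(\delta_{li}E_{lj}-E_{li}E_{lj})w_k=0$ for all $l,i,j$, which by the explicit description of $L_n(P,r)$ and the congruence (\ref{2.6}) means that modulo $L_n(P,r)$ the vector $\sum_k p_k\otimes w_k$ is annihilated by all of $W_n^+$ up to $L_n(P,r)$, i.e.\ lies in $\tilde L_n(P,r)$ by the very definition (\ref{2.9}); hence $\bar V=0$, contradicting $\bar V\neq0$. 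A small separate case is $r=0$ and $r=n$, where $V(\delta_r,r)$ is one-dimensional: there $L_n(P,0)=0$ and $L_n(P,n)=\sum_k\partial_k P\otimes(\epsilon_1\wedge\cdots\wedge\epsilon_n)$, and the statement reduces to the irreducibility facts already recorded after Theorem~\ref{thm-2.1} and in the Corollary, so these can be dispatched quickly by hand.
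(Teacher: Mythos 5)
There is a fundamental gap. The entire Claim 1--2--3 machinery from Theorem \ref{thm-2.1} degenerates when $M = V(\delta_r,r)$, because the operators $\delta_{li}E_{lj}-E_{li}E_{lj}$ annihilate the exterior power module $\bigwedge^r(\C^n) \cong V(\delta_r,r)$ identically for \emph{all} $l,i,j$. (This is precisely how the proof of Theorem \ref{thm-2.1} ends: one shows $M$ must satisfy $(E_{ii}-1)E_{ii}M = E_{li}^2M=0$, which characterizes exactly the modules $V(\delta_r,r)$.) Consequently your Claim 1 only produces $\sum_k t^\alpha p_k \otimes (\delta_{li}E_{lj}-E_{li}E_{lj})w_k = 0 \in V$, Claim 2's conclusion $(\delta_{li}E_{lj}-E_{li}E_{lj})w_k = 0$ is automatically true for every $w_k \in V(\delta_r,r)$, and Claim 3 is likewise vacuous. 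You obtain no constraint on $V$ at all.

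The final step of your argument is then a non sequitur: you try to deduce that $\sum_k p_k\otimes w_k \in \tilde L_n(P,r)$ from $(\delta_{li}E_{lj}-E_{li}E_{lj})w_k=0$, but since the latter holds for \emph{every} element of $F(P,V(\delta_r,r))$, this would force $\tilde L_n(P,r) = F(P,V(\delta_r,r))$ and the quotient would always be zero, contradicting the statement. Nothing in the congruence \eqref{2.6} together with the vacuous identity puts $v$ into $\tilde L_n(P,r)$.

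The paper's actual proof takes a different route that avoids the degenerate operators. Since $v \notin \tilde L_n(P,r)$, by the definition \eqref{2.9} there is some $t^\beta\partial_j$ with $t^\beta\partial_j v \notin \tilde L_n(P,r)$. The congruence \eqref{2.6}, which rewrites $t^{\gamma+\beta}\partial_j(p\otimes w)$ modulo $L_n(P,r)$ via the operators $w\mapsto \delta_{js}w - E_{sj}w$ (these do \emph{not} vanish on $V(\delta_r,r)$), shows that $\sum_k\sum_s xt^\beta\partial_s p_k\otimes(\delta_{js}w_k - E_{sj}w_k) \in V$ for all $x \in \K_n^+$, with the $\gamma=0$ case nonzero mod $\tilde L_n(P,r)$. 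Now the Density Theorem yields $P\otimes w' \subseteq V$ for some $0\ne w'$, and the usual $M_1$-argument forces $V = F(P,V(\delta_r,r))$. You need to replace the Claim 1--3 scheme by this use of \eqref{2.6} (or some other nondegenerate computation); the verbatim transcription does not work.
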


\begin{proof} Suppose that $F(P,V(\delta_r,r))/\tilde{L}_n(P,r)\ne 0$. Let $V$ be any submodule of $F(P,V(\delta_r,r))$ with $\tilde{L}_n(P,r)\subsetneq V$.
Take $$v=\sum_{k=1}^q p_k\otimes w_k \in V\backslash \tilde{L}_n(P,r).$$ From (\ref{2.9})
there exists some $t^{\beta}\partial_j$ such that $t^{\beta}\partial_j v\not\in \tilde{L}_n(P,r)$. From (\ref{2.6}) we have
  \begin{equation}\label{2.9'}t^{\gamma+\beta}\partial_j\cdot v \equiv \sum_{k =1}^q \sum_{s=1}^n t^{\gamma+\beta}\partial_s p_k\otimes(\delta_{j,s}w_k-  E_{sj}w_k)\,\, {\rm mod}\,\, \tilde{L}_n(P,r).   \end{equation}
Let $v_1=\sum_{k =1}^q \sum_{s=1}^nt^{\beta}\partial_s p_k\otimes(\delta_{j,s}w_k-  E_{sj}w_k)$ which is nonzero. Taking $\gamma=0$ in \eqref{2.9'} we see that  $v_1\not\in \tilde{L}_n(P,r)$. Noting that $t^{\gamma+\beta}\partial_j\cdot v\in V$ and from \eqref{2.9'} we see that
 $$ \sum_{k =1}^q \sum_{s=1}^nxt^{\beta}\partial_s p_k\otimes(\delta_{j,s}w_k-  E_{sj}w_k)\in V, \forall \ x\in \mathcal{K}_n^+ (\text{resp. }\forall x\in \mathcal{K}_n).$$ Now by the Density Theorem, we have $P\otimes w \subseteq V$ for some $0\ne w\in V(\delta_r,r)$. Let $M_1=\{w\in V(\delta_r,r)| P \otimes w\subseteq V\}$. We see that $M_1\ne0$. Again from
$$(t_j\partial_i)\hskip -3pt \circ \hskip -3pt (p \otimes w)=(t_j\partial_i p\otimes w+ p\otimes  E_{ji}(w))\in V, \forall w\in M_1, p\in P,$$
 we see that $M_1$ is a $\gl_n$-submodule of $V(\delta_{r},r)$. Thus $M_1=V(\delta_r,r)$. Consequently, $V=F(P, V(\delta_{r},r))$. Then   $F(P,V(\delta_r,r))/\tilde{L}_n(P,r)$ is an irreducible  $\W_n^+$-module (resp. $\W_n$-module).\end{proof}

\begin{theorem}\label{3.5} Let $P$ be an irreducible module over the associative algebra    $\mathcal{K}_n^+$ (resp. $\mathcal{K}_n$). Then
\begin{itemize}
\item[(1).] $\tilde{L}_n(P,r)={\rm ker} \pi_r$ for all $r=0,1,2,\ldots,n-1;$
\item[(2).]    $F(P, V(\delta_{r},r))/\tilde{L}_n(P,r)$ are irreducible for all $r=0,1,\ldots,n-1$;
\item[(3).]  $L_n(P,r)$   are irreducible for all $r=1,2,\ldots,n$, and further, $$F(P, V(\delta_{r-1},r-1))/\tilde{L}_n(P,r-1)\simeq L_n(P,r);$$
\item[(4).] $F(P,V(\delta_0,0))=P$ is irreducible if $P$ is not isomorphic to the natural module $A_n^+ $ (resp. $A_n$), and $A_n^+/\C t^0$  (resp. $A_n/\C t^0$) is irreducible;
\item[(5).] $F(P,V(\delta_n,n))$ is irreducible if and only if $\sum_{k=1}^n \partial_k P=P$. If $F(P,V(\delta_n,n))$ is not irreducible, then $F(P,V(\delta_n,n))/L_n(P,n)$ is a trivial module.
\end{itemize}
\end{theorem}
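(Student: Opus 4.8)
The five parts are interlinked — (1) and (3) depend on (2), while (4) and (5) are self‑contained — so I would establish them in the order (5), (4), (2), (1), (3), after first recording three preliminary facts that carry most of the weight. (i) As in the proof of the preceding corollary, $\partial_jP\ne 0$ for every $j$, hence $\operatorname{Im}\pi_r\ne 0$ for $0\le r\le n-1$. (ii) The subspace $P_0=\{p\in P\mid\partial_jp=0\ \text{for all }j\}$ vanishes unless $P$ is isomorphic to the natural module $A_n^+$ (resp. $A_n$), in which case $P_0=\C t^0$; to see this I would note that for $0\ne g\in P_0$ the set $\C[t]g$ is a nonzero $\K_n^+$‑submodule (since $\partial_j(fg)=(\partial_jf)g$), hence all of $P$, that $\operatorname{Ann}_{\C[t]}(g)$ is a $\partial_j$‑stable ideal and so is zero, and therefore $f\mapsto fg$ identifies $P$ with $A_n^+$ carrying $P_0$ onto the constants. (iii) Most importantly, for $1\le r\le n$ the module $F(P,V(\delta_r,r))$ has no nonzero trivial $W_n^+$‑submodule: if $0\ne v=\sum_kp_k\otimes w_k$ with the $w_k$ linearly independent were killed by $W_n^+$, then applying $\partial_j=t^0\partial_j$ would force $\partial_jp_k=0$, so by (ii) $P\cong A_n^+$ and $v=t^0\otimes w$; but then $(t_j\partial_j)\circ v=t^0\otimes E_{jj}w$ for every $j$, whence $(\sum_jE_{jj})w=rw=0$ and $w=0$, contradicting $r\ge 1$.

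For (5) I would use the identification $F(P,V(\delta_n,n))\cong P$, under which — because $\bigwedge^n\C^n$ is one dimensional and $E_{ij}$ acts on it by $\delta_{ij}$ — the action is $(t^\alpha\partial_j)\circ g=\partial_j(t^\alpha g)$ and $L_n(P,n)=\sum_{j=1}^n\partial_jP$ (directly from the definition of $\pi_{n-1}$). Any submodule $V$ is then stable under each $\partial_j$ (take $\alpha=0$), so its $\C[t]$‑span $\C[t]V$ is a $\K_n^+$‑submodule, hence $0$ or $P$; if $V\ne 0$ then $\C[t]V=P$, and since $V$ contains $\partial_j(t^\alpha v)$ for all $\alpha$ and all $v\in V$, I get $V\supseteq\sum_j\partial_j(\C[t]V)=\sum_j\partial_jP$. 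So $\sum_j\partial_jP=P$ forces $V=P$; conversely if $\sum_j\partial_jP\ne P$ then $L_n(P,n)$ is nonzero and proper and $F(P,V(\delta_n,n))/L_n(P,n)$ is trivial since $\partial_j(t^\alpha g)\in L_n(P,n)$ always. For (4) — which is really the $r=0$ instance of (2) — the action on $F(P,V(\delta_0,0))=P$ is $(t^\alpha\partial_j)\circ g=t^\alpha\partial_jg$; for a nonzero submodule $V$ and $0\ne g\in V$ I would check that the span $W$ of all $t^\alpha\partial^\beta g$ with $|\beta|\ge 1$ is $\C[t]$‑ and $\partial_j$‑stable, hence a $\K_n^+$‑submodule, and that the submodule generated by $g$ is exactly $\C g+W$. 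Thus $W=0$ or $W=P$; if $W=P$ then $V=P$, and if $W=0$ then $g\in P_0$, so by (ii) $P\cong A_n^+$ and $g\in\C t^0$. Hence $F(P,V(\delta_0,0))$ is irreducible unless $P\cong A_n^+$, in which case $\C t^0$ is the only proper nonzero submodule and $A_n^+/\C t^0$ is irreducible.

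For (2) with $0\le r\le n-1$: by Lemma \ref{lemma-2.5} the module $F(P,V(\delta_r,r))/\tilde L_n(P,r)$ is either $0$ or irreducible, and I must exclude $0$. If it were $0$, then $W_n^+F(P,V(\delta_r,r))\subseteq L_n(P,r)\subseteq\ker\pi_r$ (the second inclusion from $\pi_r\pi_{r-1}=0$, resp. $L_n(P,0)=0$), so applying $\pi_r$ gives $W_n^+\operatorname{Im}\pi_r=0$; but $\operatorname{Im}\pi_r=L_n(P,r+1)\ne 0$ by (i), so it would be a nonzero trivial submodule of $F(P,V(\delta_{r+1},r+1))$, contradicting (iii) since $1\le r+1\le n$. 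For (1): for $v\in\tilde L_n(P,r)$ one has $W_n^+\pi_r(v)=\pi_r(W_n^+v)\subseteq\pi_r(L_n(P,r))=0$, so $\pi_r(v)$ generates a trivial submodule of $F(P,V(\delta_{r+1},r+1))$ and hence vanishes by (iii); this gives $\tilde L_n(P,r)\subseteq\ker\pi_r$, and since $\ker\pi_r$ is a proper submodule (by (i)) containing $\tilde L_n(P,r)$ with irreducible quotient (by (2)), equality follows. Then (3) is immediate from the first isomorphism theorem: $L_n(P,r)=\operatorname{Im}\pi_{r-1}\cong F(P,V(\delta_{r-1},r-1))/\ker\pi_{r-1}=F(P,V(\delta_{r-1},r-1))/\tilde L_n(P,r-1)$, which is irreducible by (2).

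I expect the main obstacle to be fact (iii) together with its use in (2): showing that $F(P,V(\delta_r,r))$ cannot equal $\tilde L_n(P,r)$ is exactly what keeps the quotient in Lemma \ref{lemma-2.5} from collapsing to zero, and it is what forces the complex $\pi_\bullet$ to break up into the short exact sequences encoded in (1)--(3). A secondary difficulty is the forward direction of (5), where one must pass from $V$ to its $\C[t]$‑span in order to recognize a $\K_n^+$‑submodule and convert the hypothesis $\sum_j\partial_jP=P$ into irreducibility.
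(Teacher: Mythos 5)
Your proposal is correct and follows the same basic skeleton as the paper (Lemma \ref{lemma-2.5} plus $\pi_r\ne 0$, established via $\partial_jP\ne 0$), but it reorganizes the argument and replaces two appeals to outside machinery by self-contained computations, so the comparison is worth spelling out. For parts (1)--(3) your route is essentially the paper's, with the dependency reversed: you first isolate the observation that $F(P,V(\delta_s,s))$ for $1\le s\le n$ has no nonzero trivial vector (your fact (iii), proved via $P_0=\{p:\partial_jp=0\ \forall j\}$ being $0$ or $\C t^0$), and then use it to show that the quotient in Lemma \ref{lemma-2.5} is nonzero and that $\tilde L_n(P,r)\subseteq\ker\pi_r$; the paper instead embeds the same computation directly in the proof of $\tilde L_n(P,r)\subseteq\ker\pi_r$, explicitly writing $\pi_r(v)=1\otimes w$ and killing it with $\frac{1}{r+1}\sum_i t_i\partial_i$, and then extracts (2) and (3) afterward. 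For part (4) the paper simply cites Theorems 6 and 12 of \cite{TZ}, while you give a short direct proof by observing that the $W_n^+$-submodule generated by $g$ equals $\C g+\operatorname{span}\{t^\alpha\partial^\beta g:|\beta|\ge 1\}$ and that the latter is a $\K_n^+$-submodule; this is a genuine (and arguably more illuminating) replacement. For part (5) the paper deduces irreducibility of $L_n(P,n)$ from (3) and then argues simplicity iff $F=L_n(P,n)$, whereas you give a self-contained argument under the identification $F(P,V(\delta_n,n))\cong P$ with action $(t^\alpha\partial_j)\circ g=\partial_j(t^\alpha g)$: every nonzero submodule $V$ has $\C[t]V=P$ and hence $V\supseteq\sum_j\partial_jP=L_n(P,n)$. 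Both routes are valid; yours makes (4) and (5) independent of (1)--(3) and avoids the external citation, at the cost of doing a little more by hand.
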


\begin{proof}(1). We first prove that $\tilde{L}_n(P,r)\subseteq {\rm ker} \pi_r$.

For any $v\in \tilde{L}_n(P,r)$, we know that $(t^{\beta}\partial_j) v\in L_n(P,r)$. Then $(t^{\beta}\partial_j)(\pi_rv)=\pi_r((t^{\beta}\partial_j)v)=0$. In particular, we have  $\partial_j(\pi_rv)=0,$ for all $j=1,2,\ldots,n$, which implies $\pi_r(v)=0$ if $P\not\cong {{A}_n^+ }$  (resp. $P\not\cong A_n $). Now suppose that $P\cong A_n^+$ (resp. $P\cong A_n $). Then we have
  $$\pi_{r}(v)=1\otimes w\in 1\otimes V(\delta_{r+1},r+1)$$ for some $w\in  V(\delta_{r+1},r+1)$.
 Thus $$\aligned \pi_{r}(v)=1\otimes w=&\frac{1}{r+1}(t_1\partial_1+t_2\partial_2+\ldots+ t_n\partial_n)(1\otimes w)\\
 =&\frac{1}{r+1}(t_1\partial_1+t_2\partial_2+\ldots+ t_n\partial_n)\pi_{r}(v)\\
 =& \ 0.\endaligned $$ So $\tilde{L}_n(P,r)\subseteq {\rm ker} \pi_r$.

Since $\pi_r\ne0$, using Lemma \ref{lemma-2.5} we deduce that $\tilde{L}_n(P,r)={\rm ker} \pi_r,$ for all $r=0,1,2,\ldots,n-1.$ Part (1) follows.

Now we consider the module homomorphism sequence:
$$\aligned 0\to F(P,V(\delta_0,0))&\to F(P,V(\delta_1,1))\to \cdots \\
&\to F(P,V(\delta_{n-1},n-1))\to F(P,V(\delta_n,n))\to 0.\endaligned
$$

Part  (2) follows from Lemma \ref{lemma-2.5} and the fact that $\pi_r\ne0$.

Part  (3) follow from Lemma \ref{lemma-2.5}, the above module homomorphism sequence  and (1).

 (4). This follows from Theorems 6 and  12 in \cite{TZ}.

 (5). From (3) and Lemma \ref{lemma-2.5}, we know that $F(P,V(0,n))$ is simple if and only if $F(P,V(0,n))=L_n(P, n)$. From \eqref{2.4'} we have
 $$L_n(P, n)=(\sum_{i=1}^n \partial_i P)\otimes (\epsilon_1\wedge\cdots \wedge \epsilon_n).$$
 Thus $F(P,V(0,n))$ is simple if and only if $P=\sum_{i=1}^n \partial_i P$.
 % And it is straightforward to verify that
% $$t^{\gamma} \partial_j(p\otimes (\epsilon_1\wedge\cdots \wedge \epsilon_n))=(\partial_j t^{\gamma}p)\otimes %(\epsilon_1\wedge\cdots \wedge \epsilon_n)\in L_n(P, n).$$

The other statement in Part (5) is obvious. This completes the proof of Part (5).
 \end{proof}

 %\begin{lemma}
%
%
%\begin{itemize}
%\item[(1).]  $L_n(P,r)$ is the unique irreducible submodule of $F(P,V(\delta_r,r))$ for each $r=1,2,\ldots,n;$
%
%\item[(2).] $F(P, V(\delta_{r},r))/\tilde{L}_n(P,r)$ is the unique irreducible quotient module of $F(P,V(\delta_r,r))$ for each $r=0,1,\ldots,n-1$.
%
%\end{itemize}
%\end{lemma}
%
% \begin{proof}
%
% (1)  Let us first prove that $F(P,V(\delta_r,r))$ has no trivial submodules. To the contrary,
% suppose that $V_1=\C v$ is a one dimensional submodule. Then from $\partial_j(v)=0$ for any $j=1,\dots,n$, we know that  $P$ is isomorphic to the natural module $A_n^+$ and $v=1\otimes w\in 1\otimes V(\delta_r,r)$.
% From $$\sum_{i=1}^n (t_i\partial_i)(1\otimes w)=r(1\otimes w)=0,$$ we have $v=0$, a contradiction. Now suppose that $V\ne L_n(P,r)$ is a simple submodule of $F(P,V(\delta_r,r))$. If $V\subseteq \tilde{L}_n(P,r)$, then from the definition of $\tilde{L}_n(P,r)$, $V=W_n^+V\subseteq L_n(P,r)$, a contradiction. Then by the simplicity of the quotient module $F(P, V(\delta_{r},r))/\tilde{L}_n(P,r)$, we have that $F(P,V(\delta_r,r))=\tilde{L}_n(P,r)\oplus V$.
% Then for any $p\in P$, there is a $\widetilde{v}\in \tilde{L}_n(P,r)$ such that
% $$p\otimes (\epsilon_1\wedge\dots\wedge\epsilon_r)-\widetilde{v}\in V$$
%  \end{proof}

Now we summarize our established results into the following

\begin{corollary}\label{Cor3.6}Let $P$ be an irreducible $\K_n^+$-module (resp. $\K_n$-module),  $M$  an irreducible module over $\gl_n$.  Then
\begin{itemize}\item[(a).] $F(P, M)$ is an irreducible $W_n^+$-module  (resp. $W_n$-module)
 if  $M\not\cong V(\delta_r, r)$ for any $r\in \{0, 1,\cdots, n\}$.
\item[(b).] $F(P,V(\delta_0,0))=P$ is irreducible if and only if  $P$ is not isomorphic to the natural module $A_n^+ $ (resp. $A_n$).
\item[(c).] $F(P,V(\delta_n,n))$ is irreducible if and only if $\sum_{k=1}^n \partial_k P=P$.
\item[(d).] $F(P,V(\delta_r,r))$ is not irreducible if $r=1, 2, \cdots, n-1$.
\end{itemize}
\end{corollary}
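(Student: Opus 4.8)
The plan is to assemble the corollary directly from the results established above; almost no fresh computation is required. Part (a) is precisely Theorem \ref{thm-2.1}, and part (c) is precisely Theorem \ref{3.5}(5). Part (d) is the corollary stated immediately after Lemma \ref{2.2}: since $P$ is simple it is nonzero, so $\partial_i P\neq 0$ for each $i$, whence every $\pi_r$ with $0\leq r\leq n-1$ is nonzero; consequently, for $1\leq r\leq n-1$ the subspace $L_n(P,r)=\operatorname{Im}\pi_{r-1}$ is a nonzero $W_n^+$-submodule (resp. $W_n$-submodule) of $F(P,V(\delta_r,r))$, and it is proper because it is contained in $\ker\pi_r\neq F(P,V(\delta_r,r))$.

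The one place that needs a short additional remark is part (b). Its ``if'' direction is Theorem \ref{3.5}(4). For the converse I would argue directly: suppose $P\cong A_n^+$ (resp. $P\cong A_n$). As vector spaces $F(P,V(\delta_0,0))\cong P$, and since the identity matrix acts by the scalar $b=0$ on $V(\delta_0,0)$ the $W_n^+$-action (resp. $W_n$-action) reduces to $(t^{\alpha}\partial_j)\circ g=t^{\alpha}\partial_j g$. Because $\partial_j(t^0)=0$ for every $j$, the line $\C t^0$ is a nonzero $W_n^+$-submodule (resp. $W_n$-submodule) of $P$, and it is proper since $P$ is infinite dimensional; hence $F(P,V(\delta_0,0))$ is reducible, which establishes the ``only if'' in (b).

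For completeness I would also observe that parts (a)--(d) exhaust all pairs $(P,M)$ with $P$ an irreducible $\K_n^+$-module (resp. $\K_n$-module) and $M$ an irreducible $\gl_n$-module: the identity matrix acts on $V(\delta_r,r)$ by the scalar $r$, so the $n+1$ modules $V(\delta_0,0),\dots,V(\delta_n,n)$ are pairwise non-isomorphic, and hence $M$ is isomorphic to at most one of them; if $M\not\cong V(\delta_r,r)$ for every $r$ we are in case (a), while if $M\cong V(\delta_r,r)$ we are in case (b), (d), or (c) for $r=0$, $1\leq r\leq n-1$, or $r=n$ respectively. There is no genuine obstacle in this argument: all the substance sits in Theorems \ref{thm-2.1} and \ref{3.5} together with the corollary following Lemma \ref{2.2}, and the only non-immediate point is the explicit submodule $\C t^0$ produced for the converse in part (b).
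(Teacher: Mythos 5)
Your proof is correct and matches the paper's approach: the paper presents this corollary with no separate proof, simply as a summary of Theorem \ref{thm-2.1}, Theorem \ref{3.5}, and the corollary following Lemma \ref{2.2}, and you assemble it from exactly those pieces. The only point worth noting is part (b): the paper obtains the full ``if and only if'' by invoking Theorems 6 and 12 of \cite{TZ}, whereas you supply the ``only if'' direction yourself by exhibiting the explicit proper nonzero submodule $\C t^0$ of $A_n^+$ (resp.\ $A_n$) under the natural $W_n^+$-action (resp.\ $W_n$-action). That short self-contained argument is perfectly valid and is consistent with the quotient $A_n^+/\C t^0$ (resp.\ $A_n/\C t^0$) that the paper itself records in Theorem \ref{3.5}(4); your closing remark that the five cases are exhaustive and mutually exclusive (since $I$ acts on $V(\delta_r,r)$ by the distinct scalar $r$) is a nice piece of bookkeeping that the paper leaves implicit.
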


Next, we will give the isomorphism criterion for two irreducible modules $F(P,M)$.

\begin{lemma} Let $P,P'$ be irreducible $\mathcal{K}_n^+$-modules (resp. $\mathcal{K}_n$-modules) and $M,M'$ be irreducible $\gl_n(\C)$-modules.
 Suppose that $M\not\cong V(\delta_r,r)$ for $r=0,1,2,\ldots,n$. Then
$F(P,M)\cong F(P',M')$ if and only if $P\cong P'$ and $M\cong M'$.\end{lemma}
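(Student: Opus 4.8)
The plan is to prove the isomorphism criterion by exploiting the weight-like decomposition induced by $t_1\partial_1,\dots,t_n\partial_n$, even though $F(P,M)$ need not be a weight module. The "if" direction is trivial: an isomorphism $P\cong P'$ of $\mathcal K_n^+$-modules (resp.\ $\mathcal K_n$-modules) together with an isomorphism $M\cong M'$ of $\gl_n$-modules induces an isomorphism $P\otimes M\cong P'\otimes M'$ that intertwines the $W_n^+$-action (resp.\ $W_n$-action) defined by \eqref{Action1}, since that action is built functorially from the $\mathcal K_n^+$-action on the first factor and the $\gl_n$-action on the second. So the content is the "only if" direction.

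First I would use \eqref{Action2}: the operators $\mathfrak t_j := t_j\partial_j$ act on $F(P,M)$ as $(t_j\partial_j)\otimes 1 + 1\otimes E_{jj}$, so each $\mathfrak t_j$ has a well-defined generalized-eigenspace decomposition whenever the corresponding decompositions exist on $P$ and on $M$. Since $M$ is finite-dimensional (or at least the $E_{jj}$ act locally finitely on an irreducible $\gl_n$-module is too strong in general — but $M$ here being irreducible need not be finite-dimensional; however $E_{jj}$ still need not act locally finitely). To avoid this pitfall I would instead argue directly with the Euler operator $D=\sum_j t_j\partial_j$ acting as $D\otimes 1 + 1\otimes(\operatorname{tr})=D\otimes 1 + b\cdot\mathrm{id}$ is again not available in general. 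The cleaner route: recover $\mathcal K_n^+$ (resp.\ $\mathcal K_n$) and $\gl_n$ as operators on $F(P,M)$ from the $W_n^+$-action. Concretely, from \eqref{Action1}, $\partial_j = t^{-\epsilon_j}(t_j\partial_j)$ type manipulations let one extract, for each $\alpha$, the operator $t^\alpha\partial_j\otimes 1$ and the operators $t^{\alpha-\epsilon_i}\otimes E_{ij}$ as the "lower order" terms; more precisely, I would show that the associative subalgebra of $\operatorname{End}(F(P,M))$ generated by the $W_n^+$-action contains $\mathcal K_n^+\otimes 1$ and $1\otimes U(\gl_n)$. Here I can invoke the hypothesis $M\not\cong V(\delta_r,r)$ for all $r$: by Theorem~\ref{thm-2.1} the module $F(P,M)$ is irreducible, and the submodule/density arguments of that proof (Claims 1–3) show that the image of $U(W_n^+)$ in $\operatorname{End}(F(P,M))$ is large enough to separate the two tensor factors.

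Given that $\mathcal K_n^+\otimes 1$ and $1\otimes U(\gl_n)$ are both recovered inside the $W_n^+$-action, any $W_n^+$-module isomorphism $\Phi\colon F(P,M)\to F(P',M')$ is automatically a $(\mathcal K_n^+, U(\gl_n))$-bimodule isomorphism $P\otimes M\to P'\otimes M'$. Then I would fix a simple $\gl_n$-submodule generator, or better, use that $P\otimes M\cong P'\otimes M'$ as $\mathcal K_n^+$-modules with compatible $\gl_n$-action: choosing $0\neq m\in M$ and an element $u\in\mathcal K_n^+$ as in the Density Theorem, the subspace $P\otimes m$ is identified, and comparing $\gl_n$-isotypic structure gives $M\cong M'$ and then $P\cong P'$. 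An alternative, perhaps cleaner finish: the set $\{v\in F(P,M): (\mathcal K_n^+\otimes 1)v \text{ is finite-dimensional over } \mathbb C\} = 0$ unless $\dim P<\infty$, which fails; instead use that the $\gl_n$-socle structure of $P\otimes M$ as a $U(\gl_n)$-module is $P^{\dim?}$-copies of... — since $P$ is just a vector space as far as $U(\gl_n)$ is concerned, $P\otimes M\cong M^{\oplus\dim P}$ (possibly infinite) as $\gl_n$-modules, so $M\cong M'$ by uniqueness of the simple constituent; quotienting out the $\gl_n$-action then recovers $P\cong P'$ as $\mathcal K_n^+$-modules.

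The main obstacle I anticipate is the first step: rigorously showing that the associative algebra generated by $\{\tau(t^\alpha\partial_j)\}$ inside $\operatorname{End}(P\otimes M)$ contains $\mathcal K_n^+\otimes 1$ (and hence, by difference, $1\otimes E_{ij}$ for all $i,j$). This requires a careful "leading term" analysis: products like $\tau(t^\alpha\partial_i)\tau(t^\beta\partial_j)$ produce $t^{\alpha+\beta-\epsilon_i}\otimes$ (mixed $\gl_n$ terms) plus $t^{\alpha+\beta}\partial_i\partial_j\otimes 1$ plus lower terms, and one must solve a triangular system — exactly the computation underlying $T_{\alpha,i,j}$ in the proof of Theorem~\ref{thm-2.1} — to isolate $t^\gamma\partial_j\otimes 1$ and then $t^\gamma\otimes E_{ij}$. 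Once that separation is in hand, the rest is formal. I would model the writeup on the Claims in Theorem~\ref{thm-2.1}, reusing the operator $T_{\alpha,i,j}$, and handle the excluded possibility that something degenerates by the hypothesis $M\not\cong V(\delta_r,r)$, which is precisely what guarantees all the relevant $(\delta_{li}E_{lj}-E_{li}E_{lj})$-type operators act nontrivially so that the triangular system is nondegenerate.
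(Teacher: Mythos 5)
Your ``if'' direction and the broad strategy of forcing $\psi$ to respect the $\mathcal{K}_n^+$-- and $\gl_n$--structures are the right instincts, but the key step of your ``only if'' argument has a real gap: the claim that the associative subalgebra of $\operatorname{End}(F(P,M))$ generated by $\tau(W_n^+)$ contains $\mathcal{K}_n^+\otimes 1$ (and hence, by subtraction, $1\otimes U(\gl_n)$) is unproven, and it is in fact false at the universal level. Composing the subalgebra generated by $\tau(W_n^+)$ inside $\mathcal{K}_n^+\otimes U(\gl_n)$ with $\mathrm{id}\otimes\epsilon$, where $\epsilon\colon U(\gl_n)\to\C$ is the counit (trivial representation), sends $\tau(t^\alpha\partial_j)$ to $t^\alpha\partial_j$; so the image is the associative subalgebra of $\mathcal{K}_n^+$ generated by $W_n^+$, and every non-scalar element of that subalgebra annihilates $1\in A_n^+$, so it does not contain $t_1$. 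Hence $t_1\otimes 1$ does not lie in the subalgebra generated by $\tau(W_n^+)$. Relatedly, the operator $T_{\alpha,i,j}$ that you want to reuse does \emph{not} isolate $t^\gamma\partial_j\otimes 1$ or $t^\gamma\otimes E_{ij}$; it produces the mixed operator $t^\alpha\otimes(\delta_{li}E_{lj}-E_{li}E_{lj})$, with a nontrivial factor on the $\gl_n$ side. The ``triangular system'' therefore does not close, and your ``alternative finish'' ($P\otimes M\cong M^{\oplus\dim P}$ as $\gl_n$-modules, hence $M\cong M'$) presupposes exactly what is missing, namely that $\psi$ intertwines the $1\otimes\gl_n$-action.

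The paper's proof sidesteps this separation problem entirely. Rather than trying to extract $\mathcal{K}_n^+\otimes 1$ from $\tau(U(W_n^+))$, it writes $\psi(p\otimes w)=\sum_k p_k'\otimes w_k'$ with the $p_k'$ linearly independent, and uses only that $\psi$ commutes with the genuinely available mixed operators to obtain
\[
\psi\bigl(xp\otimes(\delta_{li}E_{lj}-E_{li}E_{lj})w\bigr)=\sum_k xp_k'\otimes(\delta_{li}E_{lj}-E_{li}E_{lj})w_k',\qquad\forall\, x\in\mathcal{K}_n^+.
\]
Because $M\not\cong V(\delta_r,r)$ guarantees $(\delta_{li}E_{lj}-E_{li}E_{lj})w\neq 0$ for a suitable choice of $w,l,i,j$, one may then apply the Density Theorem to the \emph{target}'s first factor (choose $y\in\mathcal{K}_n^+$ with $yp_k'=\delta_{k1}p_1'$) and rename to reach $\psi(xp\otimes w)=xp'\otimes w'$ for all $x$. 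That produces the $\mathcal{K}_n^+$-isomorphism $\psi_1\colon P\to P'$, and feeding $t_i\partial_j$ back in then yields the $\gl_n$-compatibility and $M\cong M'$. If you want to rescue your route, you would have to replace ``recover $\mathcal{K}_n^+\otimes 1$ as operators'' with this weaker but sufficient statement about mixed operators together with density on one tensor factor.
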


\begin{proof} We will prove this result only for $W_n^+$ since the proof is valid also for $W_n$.

The sufficiency is obvious. Now suppose that
 $$\psi:F(P,M)\rightarrow F(P',M')$$ is an isomorphism of $W_n^+$-modules.
 Let $0\ne p\otimes w\in F(P,M)$.
  Write $$\psi(p\otimes w)=\sum_{i=1}^q p_i'\otimes w_i'$$ with $p_1',\ldots,p_q'$ linearly independent. Similar to (\ref{T3}), we have
\begin{equation} \label{T3'}\psi(xp\otimes (\delta_{li}E_{lj}- E_{li} E_{lj})w)=\sum_{k =1}^q xp_k'\otimes (\delta_{li}E_{lj}- E_{li} E_{lj})w_k',\end{equation}
For all $1\le i,j, l\le n$ and $ x\in \mathcal{K}_n^+$.
 Note that we have assumed that $M\not\cong V(\delta_r,r)$ for $r=0,1,2,\ldots,n$. Then  we may assume that $(\delta_{li}E_{lj}- E_{li} E_{lj})w\ne 0$ for some  $l,i,j$. Since $p_1',\ldots,p_q'$ are linearly independent,  from the Density Theorem, we can find $y\in\K_n^+$ so that $yp_i=\delta_{i1}p_1$, i.e.,   $$\psi(yp\otimes (\delta_{li}E_{lj}- E_{li} E_{lj})w)=yp_1\otimes (\delta_{li}E_{lj}- E_{li} E_{lj})w'_1\ne0.$$
 Replacing $x$ with $xy$ in (\ref{T3'}), then replacing  $p$ with $yp$, $ w$ with $(\delta_{li}E_{lj}- E_{li} E_{lj})w$,
 $p_1$ with $p'$,  and $(\delta_{li}E_{lj}- E_{li} E_{lj})w'_1$ with $w'$ we have  that $p\otimes w\ne0$ and
 $$\psi(xp\otimes w)=xp'\otimes w', \forall x\in\K_n^+.$$
Therefore $\psi_1:P\rightarrow P'$ with $\psi_1(xp)=xp'$ is a well-defined map.  Since $P$ and $P'$ are irreducible   ${\mathcal K}_n^+$-modules, so
 $$\text{Ann}_{\K_n^+}(p)=\text{Ann}_{\K_n^+}(p'), {
\text{ and }}P\simeq \K_n^+/\text{Ann}_{\K_n^+}(p)\simeq P'.$$
Thus  $\psi_1$ is an isomorphism of ${\mathcal K}_n^+$-modules.
We have
\begin{equation}\label{3.1} \psi(p\otimes w)=\psi_1(p)\otimes w',\forall p\in P.\end{equation}

Now from  $t_i\partial_j\psi(p\otimes w)=\psi((t_i\partial_j)(p\otimes w))$ and (\ref{3.1}), we deduce that
\begin{equation} \psi(p\otimes E_{ij}w)=\psi_1(p)\otimes E_{ij}w',\forall  p\in P,  i,j=1,2,\ldots,n.\end{equation}
In this manner we obtain that  \begin{equation} \psi(p\otimes yw)=\psi_1(p)\otimes yw',\forall p\in P, y\in U(\gl_n).\end{equation}
So we have
$\text{Ann}_{U(\gl_n)}(w)=\text{Ann}_{U(\gl_n)}(w')$. Since $W$ and $W'$ are irreducible $U(\gl_n)$-modules  we obtain that $$M\simeq U(\gl_n)/\text{Ann}_{U(\gl_n)}(w)\simeq M'.$$
\end{proof}

%\begin{lemma} Let $P,P'$ be irreducible $\mathcal{K}_n^+$-modules and $M$ be irreducible $\gl_n(\C)$-module.  Then
%$F(P,V(\delta_r,r))\cong F(P', M)$ if and only if $P\cong P'$ and $M\cong V(\delta_r,r)$.\end{lemma}
%\begin{proof}It suffices to show the necessity. Now suppose that
% $$\psi:F(P,V(\delta_r,r))\rightarrow F(P',M)$$ is an isomorphism of $W_n^+$-modules.
% Let $p\otimes w$ be an arbitrary nonzero element in $F(P', M)$. The there is $\sum_{k =1}^q p_k\otimes w_k\in F(P,V(\delta_r,r))$
% such that $\psi(\sum_{k =1}^q p_k\otimes w_k)=p\otimes w$. Consider the coefficients of $m^2$ on both sides of the following equality:
%$$\psi((t^{m\epsilon_l}\partial_i)(t^{\beta-m\epsilon_l}\partial_j)\sum_{k =1}^q (p_k\otimes w_k))=(t^{m\epsilon_l}\partial_i)(t^{\beta-m\epsilon_l}\partial_j)(p\otimes w),$$
%we have that $$\sum_{k =1}^q \psi(t^{\beta-2\epsilon_l}p_k\otimes (\delta_{li}E_{lj}- E_{li} E_{lj})w_k)= t^{\beta-2\epsilon_l}p\otimes (\delta_{li}E_{lj}- E_{li} E_{lj})w.$$
%From the fact that $(\delta_{li}E_{lj}- E_{li} E_{lj})V(\delta_r,r)=0$, we see that  $(\delta_{li}E_{lj}- E_{li} E_{lj})w=0$. Since $w$ is an arbitrary element in $M$, we have that
%$$(\delta_{li}E_{lj}- E_{li} E_{lj})M=0,\ \ \forall i,j,l=1,\dots,n.$$
%\end{proof}

\section{Examples}

It is known that the Weyl algebra $\mathcal{K}_n^+$ is  a simple Noetherian algebra of Gelfand Kirillov dimension $2n$.
Bernstein
has shown that $GK(P)\geq n$ for any nonzero finitely generated
$\mathcal{K}_n^+$-module $P$, see \cite{BE}. A module $P$ is called holonomic if $GK(P)=n$. Irreducible holonomic $\mathcal{K}_2^+$-modules are
classified in \cite{BV}.
Classifying all irreducible holonomic $\mathcal{K}_n^+$-modules for $n>2$
 is sill open.

 Block  classified
 irreducible $\mathcal{K}_1^+$-modules, up to irreducible elements in $\mathcal{K}_1^+$,  see \cite{B}.
All irreducible $\mathcal{K}_1^+$-modules are holonomic. Note that the $n$-th Weyl algebra $\mathcal{K}_n^+$ is the tensor
product of rank $1$ Weyl algebras $\C[t_1,\partial_1]$, $\C[t_2,\partial_2], \dots, \C[t_n,\partial_n]$.
By tensoring  irreducible $\C[t_i,\partial_i]$-modules $V_i$,  one obtains an irreducible holonomic
$\mathcal{K}_n^+$-module  $V_1\otimes  \cdots\otimes V_n$.

There are a lot of infinite dimensional irreducible $\gl_n$-modules known. See, for example, \cite{FGR}, \cite{MS}, \cite{GS},   \cite{N}.
%Let us first recall from \cite{TZ} the  $W_n^+$-modules on  $U(\mh_n)$.
%For  any $b\in \C$ and
%$\Lambda_n=(\lambda_1,\lambda_2,\cdots,\lambda_n)\in (\C^*)^n$,
%denote by $\Omega(\Lambda_n,b)=\C[x_1,x_2,\cdots,$
% $x_n]$ the polynomial  algebra over $\C$ in commuting indeterminates $x_1,x_2,\cdots,
% x_n.$ The action of $\bar W_n$ on $\Omega(\Lambda_n,b)$ is defined by
%
%\begin{equation} \aligned &(t^{k+\epsilon_i}\partial_{t_i} )\cdot
%f(x_1,\cdots,x_n)\\ &\hskip 2cm =\Lambda_n^k(x_i-k_ib)f(x_1-k_1,\cdots,x_n-k_n),\\
%&t^k\cdot f(x_1,\cdots,x_n)=\Lambda_n^kf(x_1-k_1,\cdots,x_n-k_n),\endaligned \end{equation}
%where $f(x_1,\cdots,x_n)\in \mathcal{A}_n^+,
%\Lambda_n^k=\lambda_1^{k_1}\lambda_2^{k_2}\cdots\lambda_n^{k_n},$
%$i=1,2,\cdots,n,$ and $k=(k_1,k_2,\cdots,k_n)\in \Z^n$ with $k+\epsilon_i\in\Z_+^n$. It is easy to see that $\Omega(\Lambda_n,b)$ is an admissible  module over
%$\bar W_n^+$. (Remark: We cannot make  the other class of $U(\mh)$ modules into admissible $\bar W_n^+$-modules)
%
%\begin{theorem}Let $M$ be an  irreducible module over $\gl_n$, $b\in \C$ and
%$\Lambda_n=(\lambda_1,\lambda_2,\cdots,\lambda_n)\in (\C^*)^n$. Then $F(\Omega(\Lambda_n,b),  M)$ is an irreducible module over
%$W_n^+$.
%\end{theorem}
\

\noindent{\bf Example 4.1} It is known (see \cite{FGM}) that any irreducible weight $\mathcal{K}_1^+$-module is isomorphic to one of the following
$$t_1^{\lambda_1}\C[t_1^{\pm 1}],\ \lambda_1\not\in\Z, \ \C[t_1],\ \C[t_1^{\pm 1}]/\C[t_1].$$
Since $\mathcal{K}_n^+$ is the tensor product of the rank $1$ Weyl algebras, every irreducible weight $\mathcal{K}_n^+$-module is isomorphic to $V_1\otimes  \cdots\otimes V_n$, where each $V_i$ is an irreducible
 weight $\C[t_i,\partial_i]$-module. By Theorem 2.1, for an irreducible $\gl_n$-module $M$,  we have that $F(V_1\otimes  \cdots\otimes V_n, M)$ is an irreducible module over
$W_n^+$ if   $M\not\cong V(\delta_r, r)$ for any $r\in \{0, 1,\cdots, n\}$. In this way, we have constructed  a lot of irreducible weight modules  with many
different weight supports.

\

\noindent{\bf Example 4.2} For
$\Lambda_n=(\lambda_1,\lambda_2,\cdots,\lambda_n)\in (\C^*)^n$,
denote by $\Omega(\Lambda_n)=\C[x_1,x_2,\cdots,$
 $x_n]$ the polynomial  algebra over $\C$ in commuting indeterminates $x_1,x_2,\cdots,
 x_n.$ The action of $\mathcal{K}_n^+$-module on $\Omega(\Lambda_n)$ is defined by
$$\aligned \partial_{i} \cdot
f(x_1,\cdots,x_n)=&\ \lambda_i^{-1}(x_i+1)f(x_1,\cdots,x_i+1,\cdots,x_n),\\
t_i\cdot f(x_1,\cdots,x_n)=&\ \lambda_if(x_1,\cdots,x_i-1,\cdots, x_n),\endaligned $$
where $f(x_1,\cdots,x_n)\in \Omega(\Lambda_n),$
$i=1,2,\cdots,n$.  Then $\Omega(\Lambda_n)$ is an irreducible Whittaker module over $\mathcal{K}_n^+$, see Example 3.15 in \cite{BO}.
One can see that $t_i\partial_{i} $ acts freely on  $\Omega(\Lambda_n)$. Note that $$\sum_{i=1}^n\partial_i( \Omega(\Lambda_n))\ne  \Omega(\Lambda_n)$$ and $ \Omega(\Lambda_n)\not\simeq A^+_n$. Thus $F(\Omega(\Lambda_n), M)$ is an irreducible non-weight
module over $W_n^+$ if and only if  $M\not\cong V(\delta_r, r)$ for any $r\in \{1,\cdots, n\}$.

\

\begin{center}
\bf Acknowledgments
\end{center}

\noindent Part of the research presented in this paper was carried out during the visit of the
first author to  Soochow University in April of 2016.  G.L. is partially supported by NSF of China (Grants
11301143, 11471294) and  the grant at Henan University(yqpy20140044). R.L. is partially supported by NSF of China (Grants 11471233, 11371134) and a
Project Funded by the Priority Academic Program Development of Jiangsu Higher
Education Institutions. K.Z. is partially supported by  NSF of China (Grants 11271109, 11471233) and NSERC.

%G.Liu was partially supported by  NSF of China (Grant
%11301143) and  Natural Science Fund of Henan university(2012YBZR031 ).

\vspace{4mm}

\noindent   \noindent G.L.: Department of Mathematics, Henan University, Kaifeng 475004, China. Email:
liugenqiang@amss.ac.cn

\vspace{0.2cm} \noindent R.L.: Department of Mathematics, Soochow University, Suzhou 215006, Jiangsu,
P. R. China. Email: rencail@amss.ac.cn

\vspace{0.2cm} \noindent K.Z.: Department of Mathematics, Wilfrid
Laurier University, Waterloo, ON, Canada N2L 3C5,  and College of
Mathematics and Information Science, Hebei Normal (Teachers)
University, Shijiazhuang, Hebei, 050016 P. R. China. Email:
kzhao@wlu.ca

\end{document}